\documentclass{amsart}

\usepackage{graphicx}
\usepackage{xcolor}
\usepackage[T2A]{fontenc}
\usepackage[utf8]{inputenc}
\usepackage[russian,english]{babel}

\newtheorem{thm}{Theorem}[section]
\newtheorem{cor}[thm]{Corollary}
\newtheorem{lem}[thm]{Lemma}
\newtheorem{prop}[thm]{Proposition}

\newtheorem{exa}[thm]{Example}
\newtheorem{dfn}[thm]{Definition}
\newtheorem{rem}[thm]{Remark}
\numberwithin{equation}{section}

\begin{document}

\title[]{3-braids with 4-valent vertices and knot invariant}

\author{S. Kim} 
\address{Jilin University, Changchun, China}
\email{\rm kimseongjeong@jlu.edu.cn}

\author{V.O.Manturov}
\address{Moscow Institute of Physics and Technology}
\email{\rm vomanturov@yandex.ru}

\subjclass{57M25, 57M27, 20F36}%

\begin{abstract}

We construct a picture-valued invariant of classical knots by using the group $G_n^3$ and the plat closure of braids. We introduce a modification of the group $G_n^3$ and define a map from the braid group to framed 6-valent graphs with leaves. Each 6-valent vertex corresponds to the moment when three points become collinear. By using this construction, we obtain a knot invariant valued in equivalence classes of graphs modulo local moves. Our invariant provides a new graphical approach to the study of classical knots.
\end{abstract}

\maketitle

\textbf{Keywords:} knot, virtual knot, braids, plat closure of braids, knot invariant, trisecant.

\section{Introduction}

Picture-valued invariants play a great role in low-dimensional topology.
Indeed, having a numerical (polynomial, group etc.) invariant of
a topological object (say, knot, manifold), we can judge
about properties of an object indirectly: estimate its number of crossings,
genera and other characteristics. However, having an invariant
valued in linear combinations of graphs (pictures), we can judge about
the geometry of all diagrams of this objects. It is like an image recognition:
rather than having numerical characteristics we can judge about images and
patterns inside diagrams.
This method proved to be extremely effective in virtual knot theory \cite{MFKN, Manturov}.
In the present paper, we construct a picture-valued invariant of knots.
It turns out that the methods do not work directly in classical knot theory,
hence, we want to construct a map from ``the classical world'' (where invariants are usual) to the virtual world'' (where invariants are powerful).
Such a map is constructed explicitly for braids \cite{Man-class-vir}.
For virtual braids which appear as images of classical braids, we get
graphs (braid diagrams) enjoying lots of properties of picture-valued
invariants. When passing to knots, we discuss ways of passing from braids
to knots (mostly, plat closure)
and construct invariants of classical knots valued in some
graphs modulo certain moves.
 
The paper is organised as follows.
In section 2, we discuss basic notions for the theory of groups $G_{n}^{3}$, their generalisations, and the corresponding graph theory, called 3-braids.
In section 3, we construct the closure of 3-braids corresponding to the plat closure of braids on $2n$ strands. It is proved that the closure of 3-braids is (almost) invariant under Birman's moves for the plat closure. In section 4, the graph theory for $G_{n}^{3}$ is generalised as a double cover over 3-braids and anologous theorems are proved. In Section 5, we refer our further research orientation to represent the classical crossing in the 3-braids coming from classical braids.

\subsection{Acknowledgements} 
We are deeply grateful to I. N. Nikonov and Y. Liu for their valuable discussions. The author is supported by RSCF 25-21-00884 ``Applied combinatorial geometry and topology''.

\section{Basic notions of 3-braids}

\begin{dfn}
Let $G$ be a graph. We allow multi-edges and circle components. A vertex $v$ of $G$ with $2n$ incident edges is called {\bf framed} if semi-edges incident to this vertex are divided into $n$ pairs of {\em formally opposite}. A multigraph $G$ is called {\bf framed} if every vertex is framed with $2k$ incident edge for some $k \in \mathbb{N}$ or is a leaf.
\end{dfn}

\begin{dfn}
An {\em unicursal component} of a framed graph is an equivalence class of edges of the framed graph under the following equivalence relation: two edges $e$ and $e'$ are equivalent if there is a sequence of edges $\{e_{i}\}_{i=1}^{n}$ such that $e_{1}=e, e_{n}=e'$ and $e_{i}$ and $e_{i+1}$ are opposite for all $i=1,\dots, n-1$.
\end{dfn}

\begin{dfn}
A multigraph $G$ is called {\bf $*$-graph} if for each vertex $v$ the incident edges are ordered.
\end{dfn}

\begin{dfn}
{\em A 3-braid diagram on $n$ strands} is a framed $*$-graph with $2n$ leaves such that it can be placed in the square $[0,1]\times [0,1]$ satisfying following conditions:
\begin{enumerate}
\item all vertices except leaves have 6 incident edges;
\item $n$ leaves are placed on $[0,1] \times \{0\}$ and remaining $n$ points are on $[0,1] \times \{1\}$;
\item vertices can be placed for edges to go down strictly from $[0,1] \times \{1\}$ to $[0,1] \times \{0\}$.
\end{enumerate}
\end{dfn}
\begin{rem}
When drawing a diagram on a plane we always assume the ordering to be inherited from the plane: the leftmost component is the first, the middle one goes after it, and the rightmost one is the last).
\end{rem}
\begin{dfn}
	A {\em 3-braid on $n$ strand} is an equivalence class of 3-braid diagrams modulo the three 3-braid moves described in Fig.~\ref{fig:3braidmoves}.
	\begin{figure}
\centering\includegraphics[width=300pt]{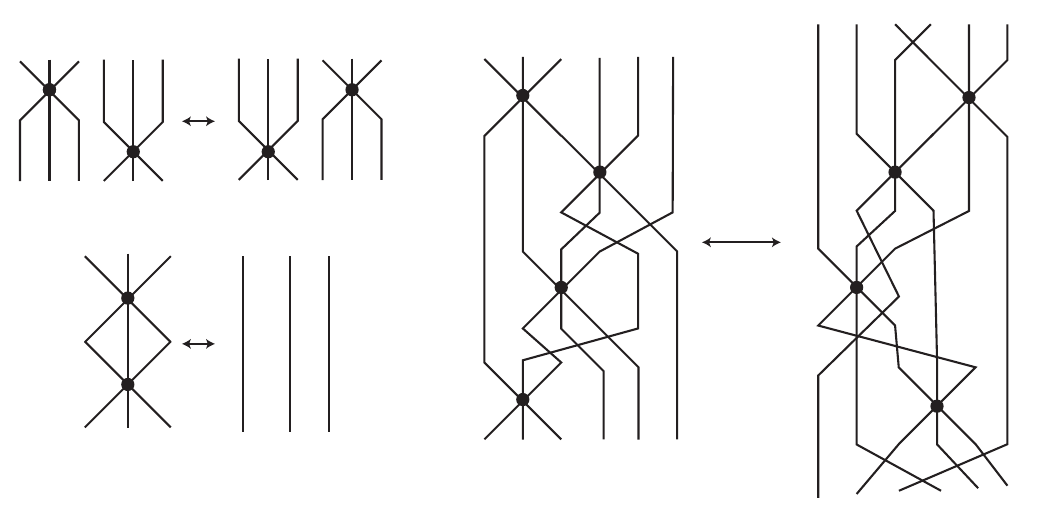}
\caption{A 3-free moves}
\label{fig:3braidmoves}
\end{figure}
\end{dfn}

\begin{dfn}
Let $\mathbf{A}_{n} = \{a_{(i,j,k)} | i,j,k \in \bar{n}\}$. The group $\hat{G}_{n}^{3}$ is defined by
$$\hat{G}_{n}^{3} = \langle \mathbf{A}_{n} | R1,R2,R3\rangle,$$
where 
\begin{itemize}
\item R1: $a_{ijk}a_{kji} =1$,
\item R2: $a_{ijk}a_{stu} = a_{stu}a_{ijk}$ \,\,\, if $|\{i,j,k\} \cap \{s,t,u\}|<2$,
\item R3: $a_{ijk}a_{ijl}a_{ikl}a_{jkl} = a_{jkl}a_{ikl}a_{ijl}a_{ijk}$.
\end{itemize}
\end{dfn}

{\bf Construction of the map from braid to $\hat{G}_{n}^{3}$}

Let $\beta$ be a braid on $n$ strand. Then $\beta$ corresponds to an equivalence class $[\gamma]\in \pi_{1}(C_{n}(\mathbb{R}^{2}), P= (P_{1}, \dots, P_{n}))$, where $(P_{1}, \dots, P_{n})$ are placed on the upper half circle. For the closed curve $\gamma \in \Omega(C_{n}(\mathbb{R}^{2}))$, one can find finitely many moments $0< t_{0} <\dots <t_{s}<1$ such that at $t=t_{l}$ three points $P_{i},P_{j}, P_{k}$ are on one straight line, say $P_{j}$ is placed between $P_{i}$ and $P_{k}$. Without out loss of generality, we may assume that at each moment there is exactly one triple of points on one line. If two ordered vectors $(\overrightarrow{P_{i}P_{j}}, \overrightarrow{P_{i}P_{k}})$ ($(\overrightarrow{P_{k}P_{j}}, \overrightarrow{P_{k}P_{i}})$, resp.) agree on the orientation of $\mathbb{R}^{2}$, then we give a generator $a_{ijk}$ ($a_{kji}$,  resp.) of $\hat{G}_{n}^{3}$, see Fig.~\ref{fig:maps-to-Gn3}.

\begin{figure}
\centering\includegraphics[width=100pt]{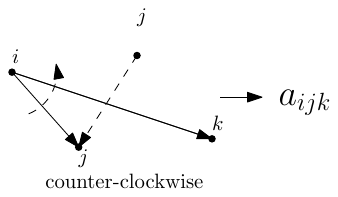}

\caption{ }
\label{fig:maps-to-Gn3}
\end{figure}

 Writing generators of $\hat{G}_{n}^{3}$ in order corresponding to each moment $t_{l}$ we obtain a word in $\hat{G}_{n}^{3}$.

\begin{exa}
From the given 3 strand braid in Fig.~\ref{Borromean_to_Gn3} one can obtain an element $a_{321}a_{312}a_{132}a_{123}a_{213}a_{231} \in \hat{G}_{3}^{3}$.
\begin{figure}
\centering\includegraphics[width=180pt]{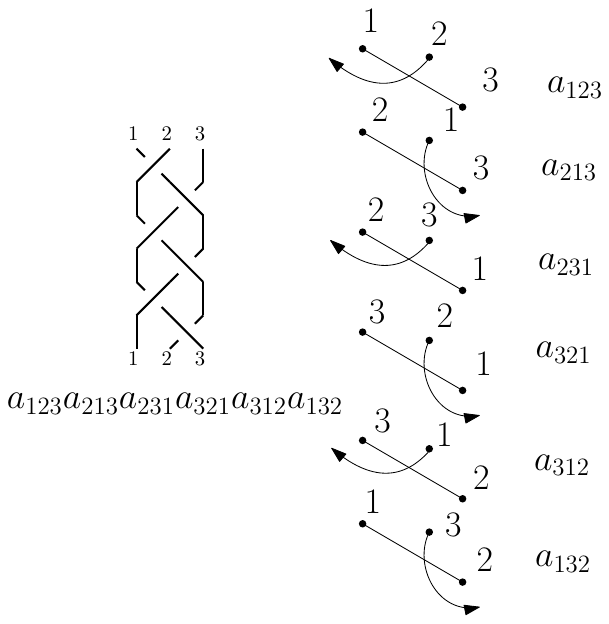}

\caption{ A braid on 3 strands and its corresponding 3-braid. We obtain $a_{321}a_{312}a_{132}a_{123}a_{213}a_{231} \in \hat{G}_{3}^{3}$.}
\label{Borromean_to_Gn3}
\end{figure}
\end{exa}
\begin{lem}\label{lem:braid-Gn3}
The map from n strand braid group to $\hat{G}_{n}^{3}$ defined as above is well-defined.
\end{lem}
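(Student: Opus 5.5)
The approach is the standard Vassiliev-style stratification argument for the configuration space $C_n(\mathbb{R}^2)$. We need to show that the word in $\hat{G}_{n}^{3}$ associated with a loop $\gamma$ depends only on the homotopy class $[\gamma]\in\pi_1(C_n(\mathbb{R}^2),P)$. The word is also unchanged under reparametrisation of $\gamma$. Once this is done, multiplicativity is automatic because concatenating loops concatenates words, so we get a homomorphism from the braid group.

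First, consider the subset $\Sigma\subset C_n(\mathbb{R}^2)$ of configurations in which some three points are collinear. It is a codimension-one subvariety. Its codimension-one strata consist of configurations with exactly one collinear triple, transversally cut. The codimension-two strata are of two kinds: (a) two distinct collinear triples occurring simultaneously, and (b) degenerations of a single triple. Kind (b) is a triple that is collinear while two of its points are at an extreme position; in a configuration space of distinct points, the only genuine degeneration is the middle point exchanging with an end point, which cannot occur without a collision. So kind (b) is essentially absent. One should additionally note configurations where four points are collinear, and points where the sheet of $\Sigma$ is tangent to the path, which give non-transversal crossings.

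By general position, every loop can be perturbed so that it meets $\Sigma$ transversally at finitely many times, one triple at a time. The base configuration on the upper half circle has no collinear triples, so it lies off $\Sigma$. Hence the word is defined. A homotopy between two such loops can be perturbed into a generic map of a square, which meets the codimension-two strata in isolated points and misses everything of higher codimension. Outside these points the word does not change. It then suffices to check the change of the word across each codimension-one event of the homotopy (a "codimension-one" wall crossing of the one-parameter family). These are:

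\textbf{Tangency.} The path touches a sheet of $\Sigma$ and then passes through it twice in opposite directions. The same triple becomes collinear twice with the same middle point but opposite orientation. With the paper's convention the letters recorded are $a_{ijk}$ and then $a_{kji}$, which cancel by R1.

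\textbf{Two independent triples.} Two triples become collinear simultaneously. When $|\{i,j,k\}\cap\{s,t,u\}|\le 1$, passing around the double point swaps the two letters, which is R2. (Triples sharing two points that are simultaneously collinear lie on a common line, so they fall under the next case.)

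\textbf{Four collinear points.} Four points $P_i,P_j,P_k,P_l$ lie on a common line, in this order. Perturbing through this configuration, a small loop around the stratum in the space of four-point configurations records the four triples $ijk, ijl, ikl, jkl$ in one order on one side and in the reverse order on the other. I would verify this by placing the points on the $x$-axis, rotating the point $P_l$ slightly up and through, and computing the order in which the triple collinearities occur, as in the classical proof for $G_n^3$. This yields R3, with orientations chosen consistently with Fig.~\ref{fig:maps-to-Gn3}.

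I expect this last case to be the main obstacle. The order of the four letters has to be determined carefully. One must also check that the orientation convention, including which end point is listed first, produces exactly the relation R3 as written, rather than a version with some indices reversed. This requires understanding the link of the quadruple-collinear stratum, which is a circle crossing the four walls twice each. The symmetric arrangement gives the two words of R3. If the indices come out reversed, I would use R1 to rewrite the letters and conjugate.
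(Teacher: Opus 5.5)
Your argument is correct. It is the standard codimension-two argument from the theory of $G_n^k$ that the paper relies on implicitly: the paper states this lemma without proof and later just cites ``the proof of Lemma~\ref{lem:braid-Gn3}''. In your argument, folds of the homotopy give R1, transverse double points of walls give R2, and the link of the four-collinear stratum gives R3.

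The R3 case you were worried about works out with no fix-up. For points in order $i,j,k,l$ along the line, the two half-circles read $a_{kji}a_{lji}a_{lki}a_{lkj}$ and $a_{lkj}a_{lki}a_{lji}a_{kji}$. This is R3 for the quadruple $(l,k,j,i)$, which is equivalent to R3 for $(i,j,k,l)$ by inverting both sides and applying R1.

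One side remark in your write-up should be dropped. For non-pure braids the points are labelled by their initial positions, so concatenating loops relabels the indices of the second word by the permutation of the first braid. The map is therefore a homomorphism only on $PB_n$, and in general a crossed homomorphism. This does not affect the lemma, which claims only well-definedness.
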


{\bf Maps from $\hat{G}_{n}^{3}$ to 3-braids on $\binom{n}{2}$ strands}

Let us define a map from $\hat{G}_{n}^{3}$ to 3-braids on $\binom{n}{2}$ strands as follows: we associate each generator $a_{ijk}$ to a framed 6-valent vertex so that the strand $(ik)$ passes through the vertex in the middle, the strand $(ij)$ passes through the vertex from left upside to right downside and the strand $(jk)$ passes through the vertex from right upside to left downside as described in Fig.~\ref{fig:a_ijk_two_triples-1}.

\begin{figure}
\centering\includegraphics[width=250pt]{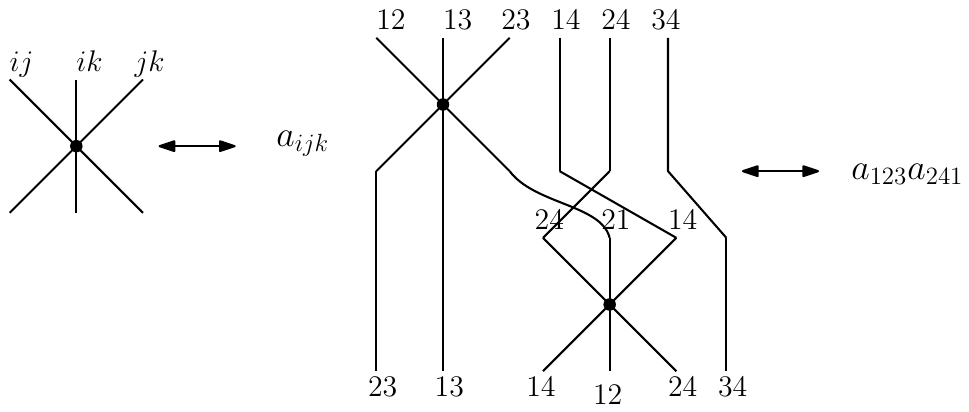}

\caption{ $a_{ijk}$ and triple points of a 3-braid}
\label{fig:a_ijk_two_triples-1}
\end{figure}

For example, one can obtain $\binom{3}{2}$ strand 3-braid diagram from an element of $\hat{G}_{n}^{3}$ as in Fig.~\ref{Borromean_to_3-free}. We assume that each strand of the corresponding 3-braid diagram is labeled by $(ij)$.

\begin{figure}
\centering\includegraphics[width=180pt]{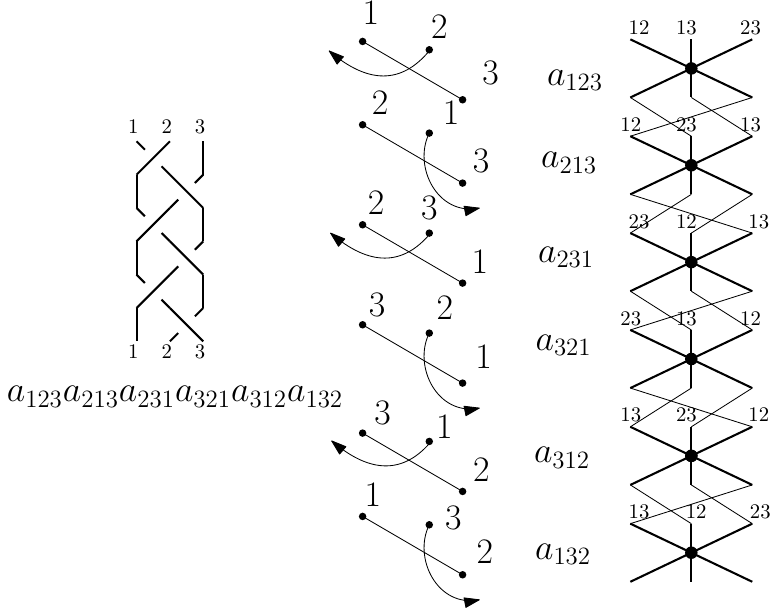}

\caption{ A braid on 3 strands and its corresponding 3-braid. We obtain $a_{321}a_{312}a_{132}a_{123}a_{213}a_{231} \in \hat{G}_{3}^{3}$.}
\label{Borromean_to_3-free}
\end{figure}

\begin{cor}\label{lem:Gn3-3braids} 
The map from $\hat{G}_{3}^{3}$ to 3-braids on $\binom{n}{2}$ strands defined as above is well-defined.
\end{cor}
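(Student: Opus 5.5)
To prove the corollary I will show that the assignment of generators to framed 6-valent vertices takes every defining relation of $\hat{G}_{n}^{3}$ to a sequence of the three 3-braid moves of Fig.~\ref{fig:3braidmoves}. (The statement writes $\hat{G}_{3}^{3}$ but means $\hat{G}_{n}^{3}$ with target 3-braids on $\binom{n}{2}$ strands; the argument does not depend on $n$.)

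The map is first defined on words. A word $a_{i_1j_1k_1}\cdots a_{i_sj_sk_s}$ is sent to a vertical concatenation of elementary pieces on the $\binom{n}{2}$ strands labelled $(pq)$. The piece for $a_{ijk}$ has one framed 6-valent vertex through which the strands $(ij),(ik),(jk)$ pass in the prescribed order. The remaining strands run vertically past it, so any other strand may have to be moved sideways. If the target allows only planar pictures, I take the positions of the strands as part of the data. Then the required braid move is exactly the far-commutativity/permutation move among the 3-braid moves, and concatenation of words corresponds to stacking diagrams. Since free reduction is a special case of R1, it then suffices to check R1, R2 and R3 one at a time.

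\textbf{R1.} Under the labelling rule, the generator $a_{kji}$ yields a vertex with the same three strands, with $(ik)$ in the middle and the left/right roles of $(ij)$ and $(jk)$ exchanged. Stacking $a_{ijk}$ and $a_{kji}$ therefore gives two consecutive vertices on the same triple of strands with mirrored framing. This is the configuration removed by the second 3-braid move, so the pair cancels to the trivial diagram.

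\textbf{R2.} If $|\{i,j,k\}\cap\{s,t,u\}|<2$, the two vertices share at most one strand label, since a common strand $(pq)$ needs two common indices. When they share no strand, the vertices lie on disjoint sets of strands and can be isotoped past each other vertically. When they share a strand, the same happens, but the shared edge is only shortened or lengthened between the two vertices. In either case this is an isotopy of the diagram in the square that keeps edges descending, hence a planar equivalence, which covers the first/third move.

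\textbf{R3.} The four generators $a_{ijk},a_{ijl},a_{ikl},a_{jkl}$ involve exactly the six strands $(ij),(ik),(il),(jk),(jl),(kl)$. Every pair of the four vertices shares exactly one strand. Hence both sides of R3 give 6-strand diagrams with four 6-valent vertices, and they should coincide with the two sides of the tetrahedron move in Fig.~\ref{fig:3braidmoves}.

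The main obstacle is this last step: checking that the strand order forced by the labelling rule matches the picture of the tetrahedron move, including the mirror version for orderings in which the middle index is not the middle number. I would handle it first by taking $i<j<k<l$ and reading off the positions of the six strands on each side, using Lemma~\ref{lem:braid-Gn3}'s geometric picture of four points passing through collinearities. Other orderings would then be reduced to this case by relabelling and, if necessary, R1.
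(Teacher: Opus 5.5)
The paper gives no separate proof of this corollary. It treats it as immediate from the construction, with the three moves of Fig.~\ref{fig:3braidmoves} being exactly the pictures of R1, R2, R3 under the vertex assignment, so your relation-by-relation check is the intended argument. Working with positive words is legitimate: R1 gives $a_{ijk}^{-1}=a_{kji}$, and the $a_{kji}$-vertex is the vertical mirror image of the $a_{ijk}$-vertex. Your R1 step is correct: the bottom order $(jk),(ik),(ij)$ of the $a_{ijk}$-vertex is precisely the top order of the $a_{kji}$-vertex, so the two form the cancelling pair.

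Two things need fixing.

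First, in R2 your own reason (a common strand $(pq)$ requires $\{p,q\}\subset\{i,j,k\}\cap\{s,t,u\}$) shows that the two vertices share \emph{no} strand. So your second case is vacuous, and it had better be, because the claim you make there is false. If two vertices share a strand, that strand's edge descends from one to the other, so their heights cannot be exchanged by an isotopy keeping edges descending; this is exactly why R2 is restricted to $|\{i,j,k\}\cap\{s,t,u\}|<2$. Your hedge about planar positions and a permutation move is also unnecessary: a 3-braid diagram is an abstract framed $*$-graph, so crossings of edges in the square carry no information.

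Second, and more substantively, R3 is the only non-trivial part of the corollary, and you stop at ``they should coincide''. The check is short. It needs no case analysis on the numerical order of $i,j,k,l$, no appeal to R1, and no geometry from Lemma~\ref{lem:braid-Gn3}. The vertex of $a_{ijk}$ depends only on the positions of the indices in the triple (first $(ij)$, middle $(ik)$, last $(jk)$), not on their values, so one symbolic verification covers every instance. The verification goes as follows:
\begin{itemize}
\item both sides of R3 consist of the same four vertices with identical $*$-orderings;
\item any two of these vertices share exactly one of the six strands, and each strand passes through exactly two of them;
\item every strand meets its two vertices in opposite orders on the two sides.
\end{itemize}
This is exactly the tetrahedron move of Fig.~\ref{fig:3braidmoves}. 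With these repairs the argument is complete.
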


\begin{thm}
    There is a map from $B_{n}$ to 3-braids on $n\choose 2$ strands. We denote it by $T(\beta)$ for $\beta \in B_{n}$.
\end{thm}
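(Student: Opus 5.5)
The map $T$ will be the composite of the two maps constructed above. Lemma~\ref{lem:braid-Gn3} gives a well-defined homomorphism $\Phi\colon B_n\to\hat{G}_n^3$. It sends $\beta$, viewed as a class $[\gamma]\in\pi_1(C_n(\mathbb{R}^2),P)$, to the word recording the successive collinearity moments. Corollary~\ref{lem:Gn3-3braids} gives a well-defined map $\Psi$ from $\hat{G}_n^3$ to 3-braids on $\binom{n}{2}$ strands. It replaces each letter $a_{ijk}$ by the framed 6-valent vertex of Fig.~\ref{fig:a_ijk_two_triples-1} and stacks the vertices from top to bottom. I would set $T(\beta)=\Psi(\Phi(\beta))$, so existence reduces to checking that the two maps compose and that the output is a 3-braid on exactly $\binom{n}{2}$ strands. (The corollary is stated for $\hat{G}_3^3$; I read it as meant for $\hat{G}_n^3$, which is the version needed here.)

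The steps, in order, are as follows.

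First, fix the base configuration. The points $P_1,\dots,P_n$ lie on the upper half circle, so no three are collinear. Label the $\binom{n}{2}$ top leaves and the $\binom{n}{2}$ bottom leaves by the pairs $(ij)$, $i<j$, in one fixed order, for example by the slopes of the lines $P_iP_j$. Because $\gamma$ is a loop, the configuration at $t=1$ equals the one at $t=0$, so top and bottom receive the same labelling.

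Second, for a word $w=a_{i_1j_1k_1}\cdots a_{i_sj_sk_s}$, show that $\Psi(w)$ really is a 3-braid diagram. Each vertex for $a_{ijk}$ involves exactly the three strands $(ij),(ik),(jk)$. Through it, $(ik)$ goes straight through the middle, and $(ij)$ and $(jk)$ exchange sides. This matches the geometric picture: as $P_j$ crosses the segment $P_iP_k$, the three lines $P_iP_j$, $P_iP_k$, $P_jP_k$ pass through a common slope, and the order of their slopes reverses. Strands not involved are drawn vertically. So every edge descends monotonically, every internal vertex is 6-valent and framed by the straight-through pairing, the edge ordering comes from the plane, and the unicursal components are exactly the $\binom{n}{2}$ strands labelled $(ij)$.

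Third, invoke the two well-definedness results. Isotopic loops $\gamma$ give words equal in $\hat{G}_n^3$, by Lemma~\ref{lem:braid-Gn3}. Words equal in $\hat{G}_n^3$ give equivalent 3-braid diagrams, because R1, R2 and R3 correspond to the three moves of Fig.~\ref{fig:3braidmoves}; this is Corollary~\ref{lem:Gn3-3braids}. Hence $T(\beta)$ depends only on $\beta$.

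The main obstacle is the bookkeeping in the first two steps. The strands must be placed in one consistent linear order at every time: the order of the pairs $(ij)$ by the slope of $P_iP_j$ changes only at collinearity moments. I must also check that at such a moment the three affected strands are adjacent, or can be made adjacent. This uses the genericity assumption that only one triple is collinear at each moment, together with planar isotopy of the diagram. If this slope-ordering argument fails globally, I would instead define the 3-braid abstractly as a framed $*$-graph with labelled strands. Monotonicity then holds by construction, since the vertices are totally ordered by the times $t_l$. Any consistent crossing-free drawing is unnecessary, because the definition of a 3-braid diagram allows non-planar placement of edges.
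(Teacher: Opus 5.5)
Your proposal is correct and follows the paper's route: the theorem is stated there with no separate argument, as the composite of the map of Lemma~\ref{lem:braid-Gn3} with that of Corollary~\ref{lem:Gn3-3braids} (which is indeed meant for $\hat{G}_n^3$), so your slope-ordering bookkeeping is optional extra detail. One harmless slip: for a non-pure braid the loop returns to the same set of points but permuted, so the bottom leaves carry the top labels $(ij)$ permuted by the braid's permutation rather than the same labelling; this does not affect the existence of $T(\beta)$.
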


\section{framed 6-valent braids with 4-valent vertices}

Generally,  for a given knot or link ``reading'' a diagram along a certain family of straight lines "writing out" the moment when three points lie on a line give a 3-diagram. But, the problem is, this is sometimes, not well-defined. 

More precisely, the problem takes place when two maxima (or two minima) of our knot or link interchange their height with respect to the direction of the ``reading''. Obviously, this does not change our knot or link, but in the level of 3-diagrams, "the saddle move'' arises, as described in Fig.~\ref{fig:max_saddle}, and apparently, it is not moves for 3-free links. 
\begin{figure}
\centering\includegraphics[width=200pt]{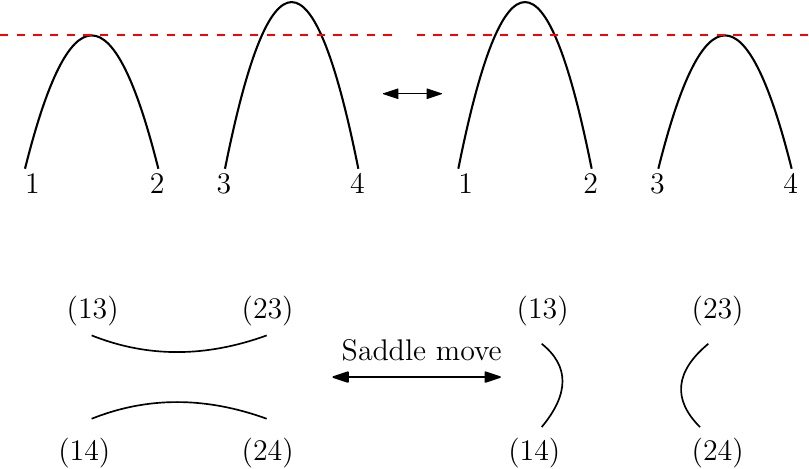}
\caption{Example of mapping for non-uniform braid}
\label{fig:max_saddle}

\end{figure}
Therefore, to construct well-defined mappings, we need to use knot theories where such transformation does not appear. The case of closed non-uniform braids and radial lines is an example of such theory. 

\begin{dfn}
Let $\beta \in B_{2n}$ be a braid on $2n$ strands. {\em A plat closure of $\beta$} is obtained by the plat closure as depicted in~Fig~\ref{plat_link}.
\end{dfn}

\begin{prop}\cite{Birman}
Every link $L$ can be presented as a plat closure of a braid $\beta \in B_{2n}$. 
\end{prop}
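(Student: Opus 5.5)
The plan is to reduce the statement to Alexander's theorem, which says every link is the closure of a braid, and then convert a closed braid into a plat. Fix a link $L$ and choose $\beta' \in B_m$ whose ordinary closure $\hat{\beta'}$ is isotopic to $L$. I will then build a braid $\beta \in B_{2m}$ whose plat closure is isotopic to $\hat{\beta'}$, which proves the proposition with $n=m$.

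For the construction, I place the $m$ strands of $\beta'$ at positions $1,\dots,m$. The $m$ return arcs of the ordinary closure run around the right-hand side, and I move them to positions $m+1,\dots,2m$ as vertical strands parallel to the braid. At the top, the closure connects strand $i$ to return arc $i$, and at the bottom it does the same. A plat closure, however, caps adjacent pairs $(2k-1,2k)$. So I pass from the pairing $i \leftrightarrow m+i$ to the standard plat pairing by composing with a fixed braid $\delta_m \in B_{2m}$ above, and with its mirror (up to the appropriate inverse) below. Concretely, $\delta_m$ moves the point at position $m+i$ to position $2i$ and position $i$ to $2i-1$, interleaving the two blocks; it can be written as a product of $\sigma_j^{\pm1}$ with all crossings of one sign. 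The required braid is then
$$\beta = \delta_m\,(\beta' \otimes 1_m)\,\delta_m^{-1},$$
where $\beta'\otimes 1_m$ denotes $\beta'$ acting on the first $m$ strands, with $m$ trivial strands on the right.

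The verification, which I expect to be the main obstacle, is checking that the plat closure of $\beta$ is isotopic to $\hat{\beta'}$. Attaching the top caps to $\delta_m$ should give, up to isotopy relative to the bottom endpoints, the nested family of arcs joining $i$ to $m+i$. This holds because an interleaving braid, followed by adjacent caps, untangles. To see it, I slide each cap down through the crossings of $\delta_m$ using the plat-closure moves: a cap absorbs a half twist $\sigma_{2k-1}$ of its own pair, and caps can be exchanged by the $\sigma_{2k}\sigma_{2k-1}\sigma_{2k+1}\sigma_{2k}$ moves. Alternatively, one observes directly that the caps together with $\delta_m$ form a trivial tangle isotopic to concentric arcs, provided the crossing signs are chosen consistently, which I would fix by drawing the case $m=2$ and inducting on $m$. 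The same argument applies at the bottom with $\delta_m^{-1}$. Once both ends are identified with nested arcs, the diagram is exactly $\beta'$ with return arcs on the right, that is, $\hat{\beta'}\simeq L$.

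Finally, I note that the unlink and degenerate cases are covered automatically, since Alexander's theorem applies to every link (take $\beta'$ trivial with $m\ge 1$). This completes the plan.
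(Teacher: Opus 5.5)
The paper gives no proof of this proposition; it is quoted from Birman, where it is the classical bridge-position fact (isotope $L$ so that every local maximum of the height function lies above every local minimum; between these levels $L$ is a braid in $B_{2n}$). Going through Alexander's theorem instead is a legitimate alternative route. Your braid $\beta=\delta_m(\beta'\otimes 1_m)\delta_m^{-1}$ does have plat closure $\hat{\beta'}$ when $\delta_m$ is the interleaving permutation braid with all crossings of one sign. The gap is in the verification, whose central claim is false.

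No family of arcs joining $i$ to $m+i$ can be nested or concentric. For $i<j$ one has $i<j<m+i<m+j$, so the endpoints of any two such arcs interleave and the arcs must cross. Flattening the ordinary closure puts the return strand of strand $i$ at position $2m+1-i$, not $m+i$; your labelling reverses the return strands. An isotopy rel boundary preserves the endpoint matching, so the tangle formed by the top caps and $\delta_m$ is \emph{not} isotopic to concentric arcs. Hence neither end can be ``identified with nested arcs'' on its own.

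Your first method fails for a related reason. The moves $\sigma_{2k-1}$ and $\sigma_{2k}\sigma_{2k-1}\sigma_{2k+1}\sigma_{2k}$ only absorb braids that carry the cap tangle to itself. But $\delta_m$ sends the cap pairing $\{2k-1,2k\}$ to $\{k,m+k\}$, so already for $m=2$, $\delta_2=\sigma_2^{\pm1}$ cannot be absorbed at all.

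What is missing is a cancellation between the two ends. With consistent signs the top tangle is layered: at every crossing, the arc whose cap lies further left passes over (say). A layered tangle is determined up to isotopy rel boundary by its endpoint matching and its layer order. Hence the top tangle is isotopic to the concentric arcs $N$ joining $i$ to $2m+1-i$, followed by $1_m\otimes\Delta_m^{\epsilon}$, where $\Delta_m$ is the half twist on the right block and $\epsilon=\pm1$.

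Reflecting a braid diagram in a horizontal line while keeping crossing information turns a word $w$ into $w^{-1}$. So the bottom tangle (the cups under $\delta_m^{-1}$) is the mirror image: $1_m\otimes\Delta_m^{-\epsilon}$ followed by the concentric cups. Since $1_m\otimes\Delta_m^{\pm1}$ commutes with $\beta'\otimes 1_m$, the two half twists cancel. The plat closure becomes concentric caps, then $\beta'\otimes 1_m$, then concentric cups, which is exactly $\hat{\beta'}$.

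This is also why the sign condition you flagged is essential. For $m\ge 3$, mixed signs can make the over/under relation among the arcs cyclic, and then no such decomposition need exist. Alternatively, choose from the start a positive permutation braid realizing $2i-1\mapsto i$, $2i\mapsto 2m+1-i$. Then each end genuinely is isotopic to concentric arcs, and your final sentence is correct as written.
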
 
In \cite{Birman} a kind of Markov theorem for plat closure of braids is proved. 

\begin{prop}\cite{Birman}
Let $L_{1}$ and $L_{2}$ be equivalent links. Let $\beta_{1} \in B_{2n_{1}}$ and $\beta_{2}\in B_{2n_{2}}$ be braids such that the plat closure of $\beta_{i}$ is equivalent to $L_{i}$, $i=1,2$. Then there exists $t \geq max(n_{1},n_{2})$ such that for each $n \geq t$ 
$$\beta'_{i} = \beta_{i}\sigma_{2n_{1}}\sigma_{2n_{1}+2}\dots \sigma_{2n-2} \in B_{2n}, i=1,2$$
are equivalent modulo the moves depicted in Fig.~\ref{move_plat}.
\end{prop}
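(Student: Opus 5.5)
We propose to derive this statement from Birman's two-step description of plat closures. The tools are Reidemeister's theorem, applied to diagrams put in a normal position with respect to the height function, together with Markov-type stabilization arguments.

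\emph{Step 1: normal form.} Fix a height function and represent $L_1$ as a ``bridge'' presentation: a diagram whose local maxima all lie above a horizontal line $y=1$, whose local minima all lie below $y=0$, and which is a braid in between. Cutting at $y=0$ and $y=1$ gives $\beta_i\in B_{2n_i}$ with $n_i$ caps on top and $n_i$ cups on the bottom. We first adjust the number of bridges. Adding a small kink to a strand near the top produces one extra cap and one extra cup. In braid terms this replaces $\beta$ by $\beta\sigma_{2n}\in B_{2n+2}$ (the new pair of strands is hooked to the old last strand). Iterating, both braids can be brought to the same number $2n$ of strands for every $n\ge \max(n_1,n_2)$. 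This explains the stabilized words $\beta_i\sigma_{2n_i}\sigma_{2n_i+2}\cdots\sigma_{2n-2}$.

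\emph{Step 2: connecting the two presentations.} An isotopy between $L_1$ and $L_2$ can be realized by a finite sequence of Reidemeister moves and planar isotopies. After a generic perturbation, each step either keeps the critical points of the height function fixed or passes through a birth/death of a max--min pair, or through an interchange of heights of two maxima (or two minima). Births are absorbed by the stabilization of Step 1, provided $n$ is chosen large (this produces the threshold $t$). Reidemeister moves performed in the braid region are braid relations. The remaining events are a max (or min) sliding across a crossing and two maxima swapping order or being twisted around each other. These translate into left/right multiplication of $\beta$ by elements of the stabilizer subgroup of the caps (respectively cups) in $B_{2n}$. Examples are $\sigma_{2j-1}$, the products $\sigma_{2j}\sigma_{2j-1}\sigma_{2j+1}\sigma_{2j}$, and $\sigma_2\sigma_1^2\sigma_2$, which are exactly Hilden's generators. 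We would check that each such elementary event matches one of the moves in Fig.~\ref{move_plat}.

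\emph{Main obstacle.} The hard part is Step 2. One must show that the Hilden subgroup is generated by the listed moves, and that every isotopy can be put into a form where births and deaths happen only near the right end, so that a uniform $t$ works. First we would take a generic one-parameter family of diagrams (Cerf-type argument) and count its birth events. We would take $t$ to exceed $\max(n_1,n_2)$ plus that number, and push all births to the beginning by commuting them past the other events. Then we would handle cap/cup relative motions via Hilden's presentation of the subgroup. Since this is precisely Birman's theorem \cite{Birman}, in the paper we would cite it rather than reprove it.
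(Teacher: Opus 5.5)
The paper gives no proof of this proposition; it is simply quoted from Birman. Your closing decision to cite it rather than reprove it is therefore exactly what the paper does. You also read the statement correctly where it has a typo: the stabilizing word should begin with $\sigma_{2n_i}$, not $\sigma_{2n_1}$ for both $i$. And you correctly identify the moves of Fig.~\ref{move_plat} with Hilden's generators $\sigma_1$, $\sigma_2\sigma_1^2\sigma_2$, $\sigma_{2j}\sigma_{2j-1}\sigma_{2j+1}\sigma_{2j}$ of the subgroup $K_{2n}\subset B_{2n}$ stabilizing the caps.

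The sketch before the citation, however, is not a derivation, because Step 2 omits the event that makes the theorem hard. In a generic one-parameter family of diagrams joining two plats, a local maximum and a local minimum can also exchange heights. Right after such a moment some maximum lies below some minimum. The diagram is then not in plat (bridge) position and is not presented as the plat closure of any braid, so braid relations and Hilden moves say nothing about this step. Your list of generic events tacitly assumes that, apart from births and deaths, the isotopy stays in bridge position, which is essentially what is to be proved.

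The same problem hides in ``births are absorbed by the stabilization''. A birth creates a max--min pair at an intermediate height and at an arbitrary point of the link. Turning it into the standard stabilization at the right end means pulling the new maximum above all minima and the new minimum below all maxima, and carrying these fingers through the rest of the isotopy. That is again a matter of controlling max--min exchanges, and moving births to the start of the path does not eliminate them. The missing ingredient is a lemma that every max--min exchange can be traded for further stabilizations together with Hilden moves and braid relations. That is the real content of Birman's theorem (the plat analogue of the Reidemeister--Singer theorem), and nothing in the sketch supplies it.

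A smaller omission: ``for each $n\ge t$'' does not follow just by iterating. You need stabilization to be well defined on $K$-double cosets, and elements of $K_{2n}$ cannot simply be pushed past $\sigma_{2n}$. For instance $\sigma_{2n-1}\in K_{2n}$, but $\sigma_{2n}^{-1}\sigma_{2n-1}\sigma_{2n}\notin K_{2n+2}$: its permutation swaps $2n-1$ and $2n+1$, so it does not preserve the pairs $\{2k-1,2k\}$. This is repaired by using Hilden elements on the two new strands, which commute with $\beta$. One has $\sigma_{2n-1}\sigma_{2n}=\sigma_{2n+1}\sigma_{2n}\,k$ with $k=(\sigma_{2n}\sigma_{2n-1}\sigma_{2n+1}\sigma_{2n})^{-1}\sigma_{2n}\sigma_{2n-1}^{2}\sigma_{2n}\in K_{2n+2}$, so $\beta\sigma_{2n-1}\sigma_{2n}=\sigma_{2n+1}(\beta\sigma_{2n})k$. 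Still, such a check has to be made. Relying on the citation, as the paper does, is the right call here.
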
 

\begin{figure}
\centering\includegraphics[width=8cm]{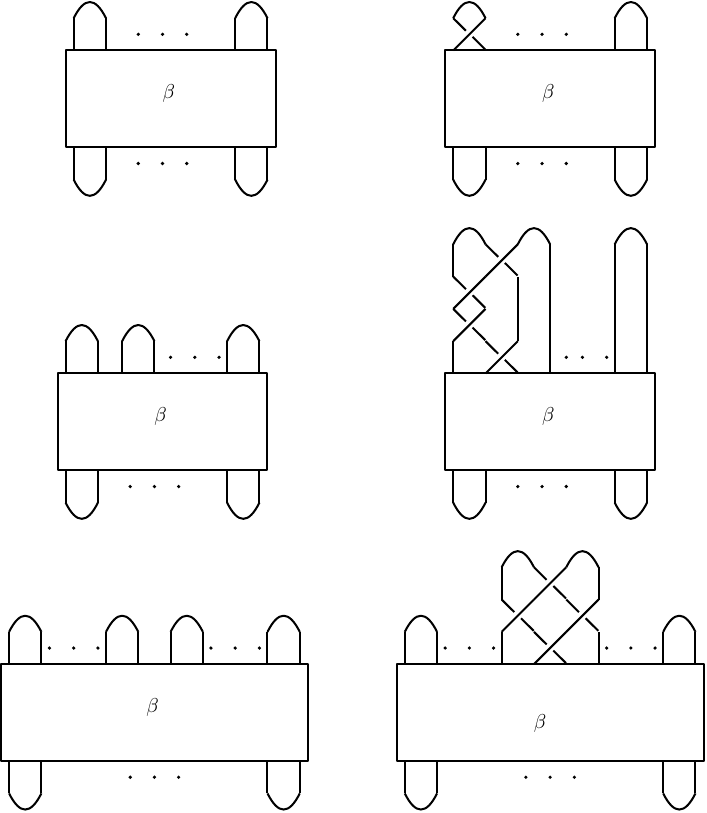}

\caption{Moves for plat closures}
\label{move_plat}
\end{figure}



Let us construct the closure of the 3-braid $T(\beta)$ corresponding to a braid on $2n$ strands: Let a link $L$ be a diagram in the form of plat representation, i.e. it is a plat closure $\bar{\beta}$ of a braid $\beta$ in $Br_{2n}$, see Fig.~\ref{plat_link}. 
\begin{figure}
\centering\includegraphics[width=5cm]{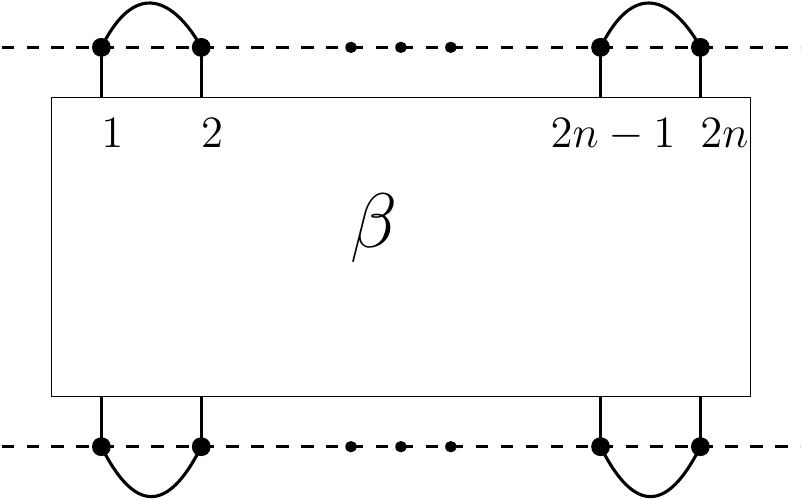}

\caption{Link in the form of plat representation}
\label{plat_link}
\end{figure}
First we give labels for each strands of $\beta$ by $1,\dots, 2n$ as in Fig.~\ref{plat_link} and obtain a 3-braid $T(\beta)$ with ends from the braid $\beta$ as in Section 3. Here each component of the 3-braid with ends is labeled by $(ij)$. Now we close the 3-braid with ends as follow: each maximal (minimal) between strands $i$ and $j$ corresponds to the end points on $[0,1] \times \{1\}$ ($[0,1] \times \{0\}$). For any two maximals of $\bar{\beta}$ which connecting strands $i$ and $j$, and $s$ and $t$, respectively, we connect ends $(is),(it),(js),(st)$ of the obtained 3-diagram to a vertex (but not framed) as described in Fig.~\ref{mapping_to_3link}. 
\begin{figure}
\centering\includegraphics[width=150pt]{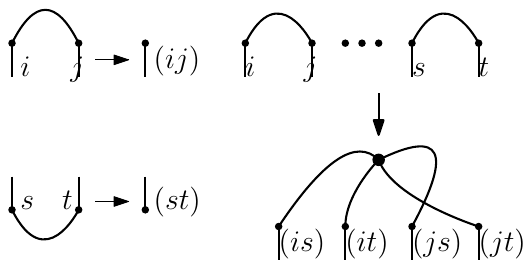}

\caption{Closing of end points of 3-diagram with endpoints}
\label{mapping_to_3link}
\end{figure}

\begin{figure}
\centering\includegraphics[width=8cm]{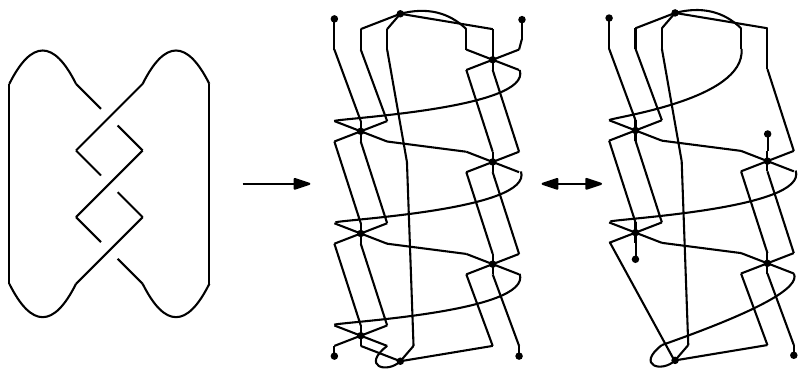}

\caption{The closure of $T(\beta)$ from a knot $\bar{\beta}$}
\label{exa_link_to_3-diag_ends}
\end{figure}

By the construction of 3-diagrams with 4-valent vertices from links in plat representations, we obtain the following corollary.
\begin{cor}
For a link $L=\bar{\beta}$, $\beta \in Br_{2n}$ in plat representations we obtain a 3-diagrams with 4-valent vertices and it is invariant under Artin moves on $\beta$ and under changes of heights of maximals and minimals. Let us denote it by $\overline{T(\beta)}$ or $T(L)$. 
\end{cor}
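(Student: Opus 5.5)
The plan is to split the statement into two independent claims. The first is invariance of the open 3-braid $T(\beta)$ under Artin moves on $\beta$. The second is invariance of the closing procedure of Fig.~\ref{mapping_to_3link} under changes of the heights of maxima and minima.

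For the first claim, I will invoke Lemma~\ref{lem:braid-Gn3}, Corollary~\ref{lem:Gn3-3braids} and the theorem giving the map $T\colon B_{2n}\to$ 3-braids on $\binom{2n}{2}$ strands. Artin moves change $\beta$ within its braid class. So the element of $\hat{G}_{2n}^{3}$, and hence the 3-braid $T(\beta)$ up to the 3-braid moves of Fig.~\ref{fig:3braidmoves}, does not change. Concretely, the strategy is the following.
\begin{itemize}
\item Represent $\beta$ by a loop in $C_{2n}(\mathbb{R}^{2})$.
\item Note that an isotopy of loops passes through codimension-one events. These are: a collinear triple appearing or disappearing in a pair (giving R1); two collinear triples with at most one common point exchanging times (giving R2); and four points passing through a configuration where all four triples are near-collinear (giving R3).
\item Observe that each relation is realised on diagrams by one of the 3-braid moves.
\end{itemize}
One extra check is needed: these moves take place in the interior of the square. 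Therefore they commute with the closing operation, which only touches the leaves on $[0,1]\times\{0\}$ and $[0,1]\times\{1\}$. The labels $(ij)$ of the leaves are also unchanged, because Artin moves preserve the endpoints $P_1,\dots,P_{2n}$.

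For the second claim, I will fix the top closure; the bottom case is symmetric. Suppose the maxima pair the strands as $\{i_1,j_1\},\dots,\{i_n,j_n\}$. The closure attaches one unframed 4-valent vertex for each unordered pair of maxima $\{i,j\},\{s,t\}$, joining the leaves $(is),(it),(js),(jt)$. This rule depends only on the pairing of the strands into maxima, not on the heights. Every leaf $(ab)$ with $a,b$ in different maxima is used exactly once. Each leaf $(ij)$ coming from the two strands of a single maximum stays free (or is capped in a fixed way).

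Next I need to see what a height change actually does. Interchanging the heights of two maxima is an isotopy of the plat diagram that changes only the cap region. The braid $\beta$ and the starting positions $P_k$ on the upper half circle stay the same. So the closed graph obtained is literally the same graph.

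The main obstacle is where the saddle phenomenon of Fig.~\ref{fig:max_saddle} goes. Reading along lines during a height exchange could create extra collinearities between points of the two caps. I would try to show that the unframed 4-valent vertex absorbs exactly these. The collinear triples $(i,j,s)$, $(i,j,t)$, $(i,s,t)$, $(j,s,t)$ that appear when the two caps pass are exactly the four triples of the quadruple $\{i,j,s,t\}$. By R3, their product is equal in both orders. Once it is contracted into the vertex joining $(is),(it),(js),(jt)$, it contributes nothing. If this argument fails, I would fall back on the definitional argument: since the caps are not read by the construction, the output cannot depend on their heights.
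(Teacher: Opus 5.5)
Your proposal takes essentially the same approach as the paper: Artin-move invariance is inherited from the well-definedness of the maps $B_{2n}\to\hat{G}_{2n}^{3}\to$ 3-braids (Lemma~\ref{lem:braid-Gn3} and Corollary~\ref{lem:Gn3-3braids}), and height invariance holds because the closure uses only the pairing of strands into maxima and minima, never their heights. Your R3 ``absorption'' paragraph is unnecessary and would not work as stated, since R3 only equates the two orderings of the four-triple product rather than cancelling it; however, the caps are never read by the construction, so your definitional fallback is already the complete argument, exactly as in the paper.
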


\begin{proof}
The invariance under Artin moves on $\beta$ is followed from the proof of Lemma~\ref{lem:braid-Gn3} and \ref{lem:Gn3-3braids}. Since we do not consider the heights of maximum and minimum, but consider pairs of maximum and minimum, which correspond to 4-valent vertices, the proof is completed.
\end{proof}


\begin{thm}
Let $\beta$ and $\beta'$ in $B_{2n}$ be braids such that their plat closures give a same link. Then $\overline{T(\beta)}$ and $\overline{T(\beta')}$ are equivalent 3-diagrams with 4-valent vertices up to the move, depicted in Fig.~\ref{fig:pass_endpoint} and~\ref{fig:cancel-triples}. Here notice that 1-valent vertex cannot pass through ``the middle'' of triple points.

\begin{figure}
\centering\includegraphics[width=8cm]{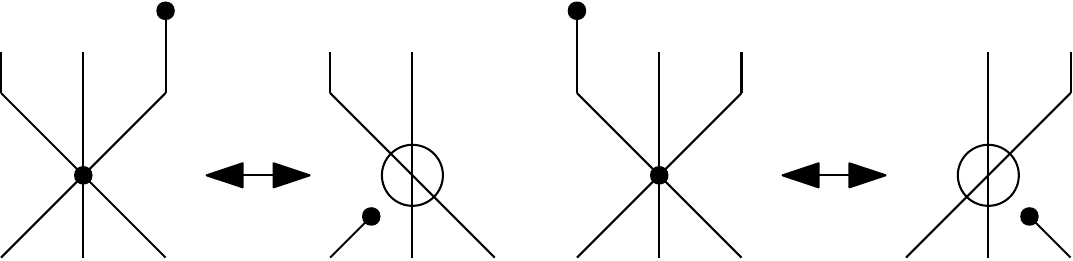}

\caption{Passing triple points of a end point}
\label{fig:pass_endpoint}
\end{figure}

\begin{figure}
\centering\includegraphics[width=8cm]{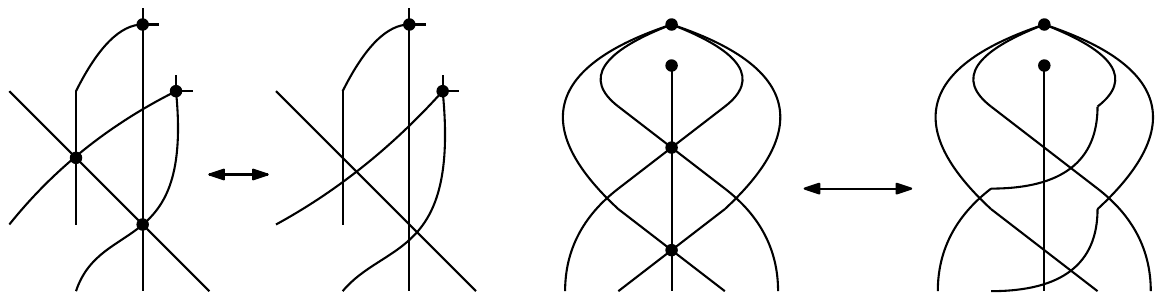}
\caption{Cancellations of two adjacent 6-valent vertices. The first move comes from that two consecutive triple points are $(i,j,k),(i,j,l)$ and $k,l$ make a maximum. The right move comes from that two consecutive triple points are $(i,j,k),(i,l,k)$ and $i,k$ and $j,l$ make maximums.}
\label{fig:cancel-triples}
\end{figure}
\end{thm}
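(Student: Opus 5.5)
The plan is to reduce the statement to Birman's theorem for plat closures (the second Proposition from \cite{Birman} above) and check each move in turn. Suppose the plat closures of $\beta$ and $\beta'$ give the same link. By that Proposition, after stabilising both braids by $\sigma_{2n}\sigma_{2n+2}\cdots$, they become equivalent modulo Artin's braid relations and the moves of Fig.~\ref{move_plat}. Since the statement is formulated for a common $B_{2n}$, I would first treat stabilisation as one of the moves. So it suffices to show two things: (i) $\overline{T(\cdot)}$ is unchanged, up to the moves of Fig.~\ref{fig:pass_endpoint} and Fig.~\ref{fig:cancel-triples}, under each of these moves; (ii) the same holds under stabilisation. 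Invariance under Artin moves and under changes of heights of maxima and minima is already the Corollary preceding the theorem, so only Birman's specific moves remain.

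Birman's moves come in two kinds. The first kind multiplies $\beta$ at the top (or bottom) by an element of the Hilden-type subgroup that fixes the plat caps: $\sigma_{2i-1}$, $\sigma_{2i}\sigma_{2i-1}\sigma_{2i+1}\sigma_{2i}$, and $\sigma_{2i}\sigma_{2i-1}^2\sigma_{2i}$. For each generator I will realise the braid as a motion of the points $P_1,\dots,P_{2n}$ on the upper half circle near the caps and list the collinearity moments, i.e.\ the word in $\hat G^3_{2n}$.

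\textbf{Twisting a cap, $\sigma_{2i-1}$.} Rotating the pair $(2i-1,2i)$ produces triples $(2i-1,2i,k)$ or $(k,2i-1,2i)$ in which the cap pair sits at the ends or at one end. In the 3-braid these are 6-valent vertices adjacent to the 4-valent closing vertex of Fig.~\ref{mapping_to_3link}. Sliding the leaf labelled $(2i-1\,2i)$ across them is exactly the move of Fig.~\ref{fig:pass_endpoint}. The remark that a leaf cannot pass through the middle corresponds to the geometric fact that a cap pair, being nearly coincident, is never separated by a third point.

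\textbf{Exchanging two caps, $\sigma_{2i}\sigma_{2i-1}\sigma_{2i+1}\sigma_{2i}$.} Moving cap $(2i-1,2i)$ past cap $(2i+1,2i+2)$ produces consecutive triples of the forms $(i,j,k),(i,j,l)$ with $k,l$ a maximum, and $(i,j,k),(i,l,k)$ with $i,k$ and $j,l$ maxima. These cancel via Fig.~\ref{fig:cancel-triples}. The leftover crossings of the strands $(is),(it),(js),(jt)$ are absorbed by the unordered 4-valent vertex.

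\textbf{The move $\sigma_{2i}\sigma_{2i-1}^2\sigma_{2i}$.} I would handle this by composing the two previous cases.

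\textbf{Stabilisation.} Appending $\sigma_{2n}$ with a new cap and cup adds two points $2n+1,2n+2$ far to the right. Their collinearities with old points can be pushed to the boundary using the passing move, and the new 4-valent vertices then cancel in pairs by Fig.~\ref{fig:cancel-triples}. Strictly, this is a comparison in different $B_{2n}$, so I would state it as equivalence after stabilising.

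The main obstacle is the bookkeeping of the exact collinearity sequence for each Hilden generator. It depends on the chosen realisation of the motion, but Lemma~\ref{lem:braid-Gn3} guarantees that any two realisations differ by relations R1--R3, so I may pick the most convenient one (caps shrunk to nearly coincident points). I would first verify $\sigma_{2i}\sigma_{2i-1}\sigma_{2i+1}\sigma_{2i}$ explicitly for $2n=4$ and check that all produced vertices cancel against the closing vertex. The general case then follows by locality, since distant points contribute only commuting (R2) factors.
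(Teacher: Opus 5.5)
Your treatment of the cap twist $\sigma_{2i-1}$ (sliding the leaf $(2i-1\,2i)$ across the triple points by Fig.~\ref{fig:pass_endpoint}) and of the cap exchange $\sigma_{2i}\sigma_{2i-1}\sigma_{2i+1}\sigma_{2i}$ matches the paper's case analysis. The move $\sigma_{2i}\sigma_{2i-1}^2\sigma_{2i}$, however, is not handled, because it is \emph{not} a composition of the other two. Already in $B_4$, the braids $\sigma_1$, $\sigma_3$ and $\tau=\sigma_2\sigma_1\sigma_3\sigma_2$ all lie in the subgroup of 2-cables (along the caps $\{1,2\}$, $\{3,4\}$) of 2-strand braids, with twisting allowed inside each cable. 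A pure element of this subgroup has the form $\tau^{2c}\sigma_1^{2a}\sigma_3^{2b}$, so strand $1$ has the same linking number $c$ with strand $3$ and with strand $4$. But $\sigma_2\sigma_1^2\sigma_2$ is pure with $\mathrm{lk}(1,3)=1$ and $\mathrm{lk}(1,4)=0$. This is why it is an independent generator in Birman's list, and it needs its own verification. The paper does this check directly. The 6-valent vertices on the local arcs $(12),(13),(14),(23),(24),(34)$ are removed as in Fig.~\ref{proof_plat_move3}. For each further cap $\{2k+1,2k+2\}$, the triple points come in consecutive pairs $(p,q,2k+1),(p,q,2k+2)$, and these cancel by the first move of Fig.~\ref{fig:cancel-triples} because $2k+1,2k+2$ form a maximum.

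Your stabilisation step is false and cannot be repaired. The moves of Fig.~\ref{fig:pass_endpoint} and Fig.~\ref{fig:cancel-triples} only slide leaves past 6-valent vertices or cancel 6-valent vertices. They never change the number of leaves ($2n$) or of unframed 4-valent closing vertices ($2\binom{n}{2}$) of $\overline{T(\beta)}$ for $\beta\in B_{2n}$. Hence $\overline{T(\beta\sigma_{2n})}$ is never equivalent to $\overline{T(\beta)}$. In Lemma~\ref{lem:inv-stabil}, what cancels by Fig.~\ref{fig:cancel-triples} are 6-valent vertices: the pairs $a_{ij(2n+1)}a_{ij(2n+2)}$ and some of the triple points coming from $\sigma_{2n}$. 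The 4-valent vertices do not cancel, and what remains is $\overline{T(\beta)D}$ with a residual 4-valent braid $D$. The Remark after that lemma says explicitly that $\overline{T}$ is not yet a knot invariant for exactly this reason. The paper's proof never stabilises. It only verifies that each of the three moves of Fig.~\ref{move_plat}, applied inside $B_{2n}$, preserves $\overline{T(\beta)}$ up to the two allowed moves, and that is all its argument establishes. You should drop the stabilisation step rather than try to absorb it. Your fallback of proving ``equivalence after stabilising'' would be a different statement from the one claimed.
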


\begin{proof}
It suffices to show that if $\bar{\beta'}$ is obtained from $\bar{\beta}$ by one of moves in Fig.~\ref{move_plat}, then $\overline{T(\beta')}$ and $\overline{T(\beta)}$ are equivalent 3-diagram with 4-valent vertices up to the move in Fig.~\ref{fig:pass_endpoint} and~\ref{fig:cancel-triples}. 

Assume that $\beta'=\sigma_{1}\beta$ are links in the left and the right parts of Fig.~\ref{proof_plat_move1}. Then $\overline{T(\beta')}$ locally has a diagram as the second figure in Fig.~\ref{proof_plat_move1}, which contains arcs labeled by $(13),(14), (23),(24)$. By using the move in Fig.~\ref{fig:pass_endpoint}, we can remove triple points, which are obtained from $\sigma_{1}$. Analogously, we can remove triple points with arcs labeled by $(1(2k+1)),(1(2k+2)), (2(2k+1)),(2(2k+2))$.

Assume that $\beta'$ is obtained from $\beta$ by the second move in Fig.~\ref{move_plat}, that is, $\beta'= \sigma_{2}\sigma_{1}^{2}\sigma_{2}\beta$. Locally $\overline{T(\beta')}$ has a diagram as the second figure in Fig.~\ref{proof_plat_move3}, which contains arcs labeled by $(12),(13),(14)$, $(23),(24),(34)$ and we can remove all 6-valent vertices as described in Fig.~\ref{proof_plat_move3}.

On the other hand, each pair $2k+1,2k+2$ for $k>1$, we obtain pairs of consecutive triple points $\{(2,3,2k+1),(2,3,2k+2)\}$, $\{(1,3,2k+1),(1,3,2k+2)\}$, $\{(2,3,2k+1),(2,3,2k+2)\}$ and $\{(1,4,2k+1),(1,4,2k+2)\}$. Since $2k+1$ and $2k+2$ make a maximum, $(2(2k+1))$ and $(2(2k+2)$ are attached to a 4-valent vertex, and so are $(3(2k+1))$ and $(3(2k+2)$. Therefore, 6-valent vertices corresponding to $\{(2,3,2k+1),(2,3,2k+2)\}$ are removed by the move in Fig.~\ref{fig:cancel-triples}. Analogously, remaining pairs of consecutive triple points are removed.

Assume that $\beta'$ is obtained from $\beta$ by the third move in Fig.~\ref{move_plat}. For simplicity, we say, $\beta' = \sigma_{2}\sigma_{1}\sigma_{3}\sigma_{4}$. Then $\overline{T(\beta')}$ locally has a diagram as the second figure in Fig.~\ref{proof_plat_move2}, which contains arcs labeled by $(12),(13),(14), (23),(24),(34)$. As the case of the second move, triple points consisting of edges with labels $(1(2k+1))$, $(2(2k+1))$, $(3(2k+1))$, $(4(2k+1))$, for $k>1$, can be removed. 

\begin{figure}
\centering\includegraphics[width=10cm]{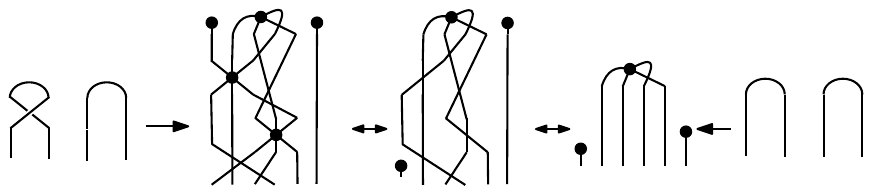}

\caption{The first move for plat closure and corresponding 3-diagrams with 4-valent vertices}
\label{proof_plat_move1}
\end{figure}

\begin{figure}
\centering\includegraphics[width=10cm]{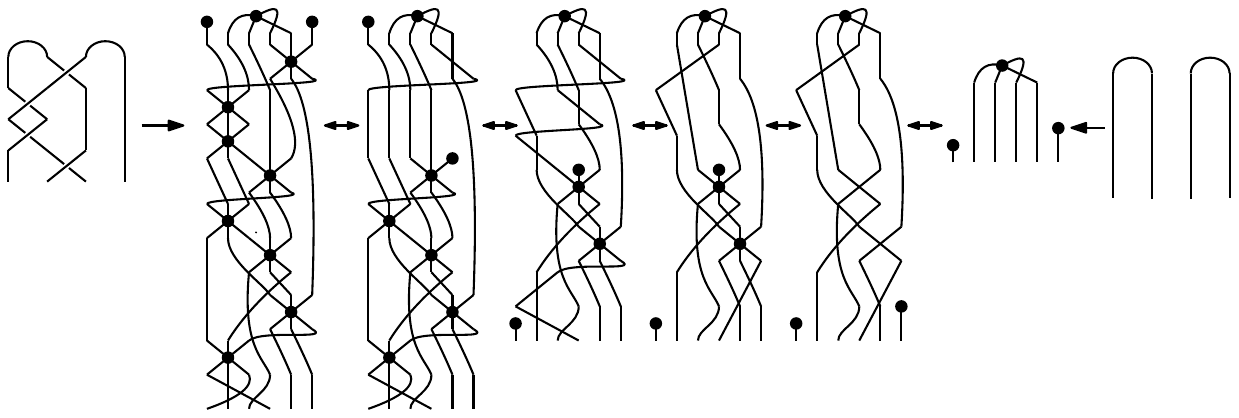}

\caption{The second move for plat closure and corresponding 3-diagrams with 4-valent vertices}
\label{proof_plat_move3}
\end{figure}

\begin{figure}
\centering\includegraphics[width=10cm]{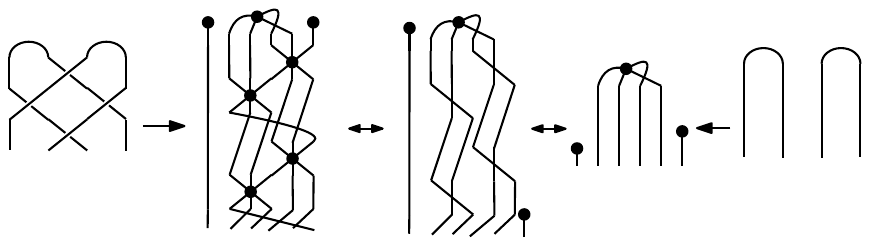}

\caption{The third move for plat closure and corresponding 3-diagrams with 4-valent vertices}
\label{proof_plat_move2}
\end{figure}
\end{proof}

\begin{figure}
\centering\includegraphics[width=8cm]{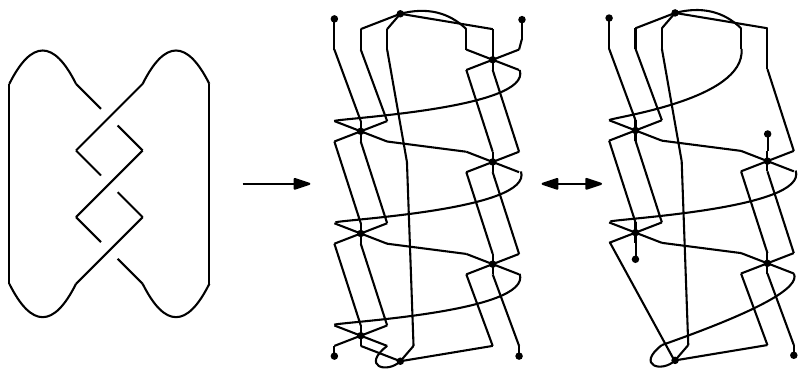}

\caption{Corresponding element $a_{234}a_{231}a_{324}a_{321}a_{234}a_{231}$ in $\hat{G_{4}^{3}}$ and the corresponding 3-diagram with 4-valent vertices to trefoil}
\label{fig:exa-link-3diagram-trefoil}
\end{figure}

\begin{lem}\label{lem:inv-stabil}
Let $\beta \in B_{2n}$ and $\beta' \in B_{2n+2}$. Let $G$ and $G'$ be corresponding 3-braids to $\beta$ and $\beta'$ respectively. If $\beta' = \beta \sigma_{2n}$, then $\overline{T(\beta')}$ is equivalent to $\overline{T(\beta)D}$ by the moves in Fig~\ref{fig:pass_endpoint} and~\ref{fig:cancel-triples}, where $D$ is a braid only with 4-valent vertices.
\end{lem}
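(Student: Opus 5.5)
Here $\beta\in B_{2n}$ is regarded inside $B_{2n+2}$ by adding two new strands $2n+1,2n+2$ on the right, joined at the top and at the bottom by a new maximum and a new minimum. The plan is to compare $\overline{T(\beta\sigma_{2n})}$ with $\overline{T(\beta)}$ directly, by listing all triple points contributed by the added strands and by $\sigma_{2n}$, and cancelling them with the moves of Fig.~\ref{fig:pass_endpoint} and Fig.~\ref{fig:cancel-triples}.

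First I would fix a representative path in $C_{2n+2}(\mathbb{R}^2)$ for $\beta'$. The points $P_{2n+1},P_{2n+2}$ stay very close to each other, near the right end of the upper half circle, while $\beta$ is performed on $P_1,\dots,P_{2n}$. The factor $\sigma_{2n}$ is then realised as a small half-twist in which $P_{2n+1}$ passes around $P_{2n}$. By Lemma~\ref{lem:braid-Gn3} and Corollary~\ref{lem:Gn3-3braids}, any representative path may be used.

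Next I would sort the triple points of this path into three types.

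\begin{enumerate}
\item \emph{Triples inside $\{1,\dots,2n\}$.} These reproduce $T(\beta)$.
\item \emph{Triples $(i,j,2n+1)$ and $(i,j,2n+2)$ with $i,j\le 2n$.} Since the two new points are nearly coincident, these occur in consecutive pairs $\{(i,j,2n+1),(i,j,2n+2)\}$. The strands $2n+1,2n+2$ form a maximum (and a minimum) in the plat closure, so the ends $(i(2n+1)),(i(2n+2))$ meet at one 4-valent vertex, and likewise for $j$. Hence each such pair is removed by the first move of Fig.~\ref{fig:cancel-triples}, exactly as in the proof of the preceding theorem for the second Birman move.
\item \emph{Triples containing both $2n+1$ and $2n+2$.} These have the form $(i,2n+1,2n+2)$. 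The arc $(2n+1\,2n+2)$ ends at a leaf coming from the new maximum or minimum, and it never passes through the middle of such a vertex. So these vertices can be slid off by the move of Fig.~\ref{fig:pass_endpoint}, as in the first-move case of the previous proof.
\end{enumerate}

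The twist $\sigma_{2n}$ contributes the extra triples $(i,2n,2n+1)$. For these I would apply the same argument, pairing them against the $(i,2n,2n+2)$ triples, or sliding them toward the leaves attached to strands $2n-1,2n$. The first move of Fig.~\ref{move_plat} has already been shown to be harmless in this way.

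After all cancellations, what remains is $\overline{T(\beta)}$ together with the new arcs labelled $(i(2n+1)),(i(2n+2))$ and $(2n+1\,2n+2)$. These run vertically with no 6-valent vertices on them, and they are connected to the 4-valent vertices at top and bottom. Since the unstabilised closure joins strands $2n-1,2n$ to one another, while $\beta\sigma_{2n}$ rewires this through strand $2n+1$, these arcs are arranged in the pattern of a braid $D$ having only 4-valent vertices. I would check that the closure conventions of Fig.~\ref{mapping_to_3link} are compatible with concatenating $T(\beta)$ with $D$, and this gives $\overline{T(\beta)D}$.

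The main obstacle is the bookkeeping in the second type of triple points. One must verify that the relevant pairs really are adjacent in the word, or can be made adjacent using relation R2. One must also verify that the 4-valent vertex attachments created by $\sigma_{2n}$ still put each pair on a common maximum. My first attempt would be to put $\sigma_{2n}$ at the very bottom of the braid, so that all the $2n+1$/$2n+2$ pairs coming from $\beta$ sit above it and cancel by the first move of Fig.~\ref{fig:cancel-triples}. Only the finitely many triples from $\sigma_{2n}$ near the bottom endpoints would then remain, and these feed into $D$.
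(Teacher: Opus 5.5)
There is a genuine gap, and it is in the triples produced by $\sigma_{2n}$. Your treatment of the triples coming from $\beta$ and the two parallel strands agrees with the paper: the consecutive pairs $(i,j,2n+1),(i,j,2n+2)$ cancel by the first move of Fig.~\ref{fig:cancel-triples}, since $2n+1,2n+2$ are joined by a maximum.

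The half-twist rotates the line through $P_{2n},P_{2n+1}$ by $\pi$. It therefore yields exactly the $2n$ triples $(2n,2n+1,m)$, $m\in\{1,\dots,2n-1,2n+2\}$, all with common pair $\{2n,2n+1\}$. Neither of your mechanisms removes those with $m\le 2n-2$:
\begin{itemize}
\item There is nothing to pair them against among triples $(i,2n,2n+2)$. The sweep produces none with $i\le 2n-1$. The ones coming from $\beta$ were already used up in your type (2), and they are separated from $\sigma_{2n}$ by the rest of the braid.
\item Sliding toward the leaves at the extremum of strands $2n-1,2n$ only reaches the triple with $m=2n-1$. The leaf arcs there have labels containing $2n-1$, and no other $\sigma_{2n}$-triple contains such an arc.
\end{itemize}
Your fallback, that the surviving triples ``feed into $D$'', contradicts the statement, since $D$ may contain no 6-valent vertices.

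The missing idea, which is what the paper uses, is to cancel the $\sigma_{2n}$-triples against each other.
\begin{itemize}
\item Because the points lie on a half circle, the rotating line meets $P_{2n+2}$ at one end of the sweep and then $P_1,\dots,P_{2n-1}$ in order. So $(2n,2n+1,2k-1)$ and $(2n,2n+1,2k)$ are adjacent for $k=1,\dots,n-1$.
\item Since $\sigma_{2n}$ sits next to the extrema at its end of the braid, and $2k-1,2k$ are joined by one of them, the arcs $(2n\,2k-1),(2n\,2k)$ meet at one 4-valent vertex. Likewise $(2n+1\,2k-1),(2n+1\,2k)$ meet at another.
\item Each such pair therefore cancels by the first move of Fig.~\ref{fig:cancel-triples}, with $\{2n,2n+1\}$ now playing the role of the common pair.
\item Only the two extreme triples, $m=2n+2$ and $m=2n-1$, need the move of Fig.~\ref{fig:pass_endpoint}.
\end{itemize}

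For those two extreme triples, check which leaf arc is actually a side arc. Your type (3) claim that $(2n+1\,2n+2)$ is never the middle strand relied on $P_{2n+1},P_{2n+2}$ being close. During the half-twist they are not close, and depending on its direction either $(2n+1\,2n+2)$ or $(2n\,2n+2)$ is the middle strand of $(2n,2n+1,2n+2)$.

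After these cancellations, the arcs involving $2n+1$ and $2n+2$ carry no 6-valent vertices. They can then be pushed past $T(\beta)$ to form $D$, as you describe.
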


\begin{proof}
    Let $\beta' = \beta \sigma_{2n}$, $T(\beta')$ can be separated to two parts: triple points coming from $\beta \cup (\text{2-parallel strands})$ and triple points coming from $\sigma_{2n}$.
    First, let us consider triple points coming from $\beta \cup (\text{2-parallel strands})$. The corresponding 3-braid has the form of $$A_{1}a_{i_{1}j_{1}(2n+1)}a_{i_{1}j_{1}(2n+2)}A_{2}a_{i_{2}j_{2}(2n+1)}a_{i_{2}j_{2}(2n+2)}\dots A_{k}a_{i_{k}j_{k}(2n+1)}a_{i_{k}j_{k}(2n+2)}A_{k+1},$$ where $A_{1}A_{2}\dots A_{k}A_{k+1}$ is the 3-braid corresponding to $\beta$. By the move in Fig.~\ref{fig:cancel-triples}, we can remove all additional pairs $a_{i_{s}j_{s}(2n+1)}a_{i_{s}j_{s}(2n+2)}$. Then triple points coming from $\sigma_{2n}$ has the form of $a_{2n,2n+1,2n+2}a_{2n,2n+1,1}a_{2n,2n+1,2}\dots a_{2n,2n+1,2n-1}$. One can easily see that the triple points corresponding to $a_{2n,2n+1,2n+2}$ and $a_{2n,2n+1,2n-1}$ can be removed by the move in Fig.~\ref{fig:pass_endpoint}. Since $$a_{2n,2n+1,1}a_{2n,2n+1,2}\dots a_{2n,2n+1,2n-3}a_{2n,2n+1,2n-2}$$ can be paired by $$(a_{2n,2n+1,1},a_{2n,2n+1,2}),(a_{2n,2n+1,3},a_{2n,2n+1,4}), \dots,(a_{2n,2n+1,2n-3}, a_{2n,2n+1,2n-2}),$$ by the move in Fig.~\ref{fig:cancel-triples}, we can remove them. Note that now strands of $\overline{T(\beta')}$ corresponding to $(i,2n+1)$, $(j, 2n+2)$ and $(2n+1,2n+2)$ do not pass triple points, but connected with 4-valent vertices. By moving them below of 3-braids, we can obtain that $\overline{T(\beta')} = \overline{T(\beta)D}$, where $D$ is a braid only with 4-valent vertices.
\end{proof}

\begin{rem}
$\overline{T(\beta)}$ is not a knot invariant yet, since the moves in Fig~\ref{fig:pass_endpoint} and~\ref{fig:cancel-triples} cannot reduce the number of 4-valent vertices. For example, see Fig.~\ref{fig:exa-3diagram-trefoil-stabilization-simple}. The knot $K$ obtained from the trefoil with one stabilization gives $T(K)$ on the second diagram in Fig.~\ref{fig:exa-3diagram-trefoil-stabilization-simple}. The part of $T(K)$ with labels $(i5)$ or $(i6)$ are colored in red. By applying moves in Fig~\ref{fig:pass_endpoint} and~\ref{fig:cancel-triples} we obtain the right most diagram and it has the diagram connecting $T(3_{1})$ and a graph only with 1 and 4 valent vertices in the blue box. We expect that there is a pattern of connections of 4-valent vertices in $D$ and we can find an appropriate move.
\end{rem}
\begin{figure}
\centering\includegraphics[width=10cm]{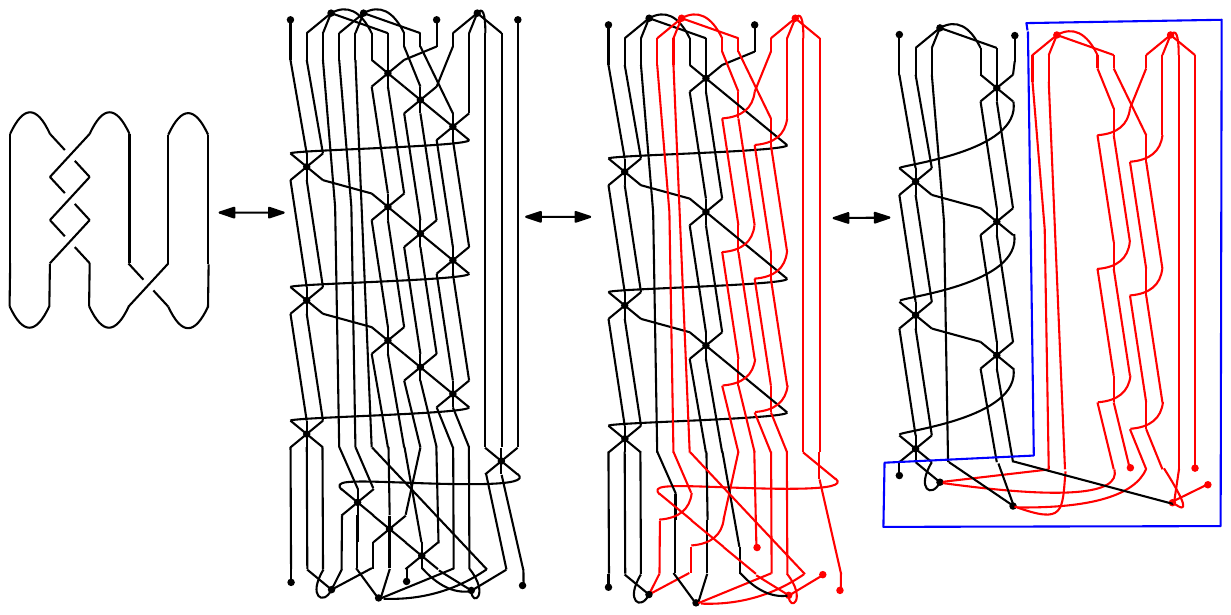}

\caption{The knot $K$ obtained from the trefoil with one stabilization and $T(K)$. }
\label{fig:exa-3diagram-trefoil-stabilization-simple}
\end{figure}
\begin{thm}\label{thm:KUO}
Let $K$ be a link such that $K=\overline{\beta}$ for a braid $\beta$ in $B_{2n}$. Let $\beta' \in B_{2n+2}$ be the image of $\beta$ by the natural inclusion $B_{2n} \hookrightarrow B_{2n+2}$, that is, $\hat {\beta'} =K\sqcup O$. Then
$\overline{T(\beta')} = \overline{T(\beta)} \sqcup \overline{T}$, for a graph $\overline{T}$ consisting of $1,4$-valent vertices.
\end{thm}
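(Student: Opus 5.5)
We prove the statement by writing down the word in $\hat{G}_{2n+2}^{3}$ assigned to $\beta'$ and comparing it with the word for $\beta$. This mirrors the first half of the proof of Lemma~\ref{lem:inv-stabil}, with the stabilizing generator $\sigma_{2n}$ omitted.

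\textbf{Step 1: the word for $\beta'$.} Realize $\beta'$ geometrically: $\beta$ acts on the first $2n$ points, and the points $P_{2n+1},P_{2n+2}$ sit far to the right on the upper half circle, very close to each other. Since these two points stay fixed and nearly coincident, every triple of $P$'s becomes collinear at exactly one of the following kinds of moments.
\begin{itemize}
\item The triple is a triple $(i,j,k)$ with $i,j,k\le 2n$, exactly as for $\beta$.
\item The line $P_iP_j$ sweeps across the tiny pair $\{P_{2n+1},P_{2n+2}\}$. This produces two consecutive letters $a_{ij(2n+1)}a_{ij(2n+2)}$, up to the orientation convention.
\item The triples $(i,2n+1,2n+2)$, which do not occur if the pair is chosen generic and fixed. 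If they do occur, they come in cancelling pairs.
\end{itemize}
Hence the word has the form
$$A_{1}a_{i_{1}j_{1}(2n+1)}a_{i_{1}j_{1}(2n+2)}A_{2}\cdots A_{k}a_{i_{k}j_{k}(2n+1)}a_{i_{k}j_{k}(2n+2)}A_{k+1},$$
where $A_1\cdots A_{k+1}$ is the word of $\beta$. By Lemma~\ref{lem:braid-Gn3} and Corollary~\ref{lem:Gn3-3braids}, the resulting 3-braid does not depend on the chosen representative.

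\textbf{Step 2: removing the extra vertices.} The strands $2n+1$ and $2n+2$ form a maximum, and also a minimum, of the plat closure. So in the closure $\overline{T(\beta')}$ the arcs $(i,2n+1)$ and $(i,2n+2)$ are joined at 4-valent vertices, at both ends. Each consecutive pair $a_{ij(2n+1)}a_{ij(2n+2)}$ is therefore exactly the configuration of the first move in Fig.~\ref{fig:cancel-triples}, with $k=2n+1$ and $l=2n+2$. We apply that move once per pair, removing all of them. What is left is $T(\beta)$ on the strands $(ij)$ with $i,j\le 2n$. The remaining strands are the $(i,2n+1)$, $(i,2n+2)$ and $(2n+1,2n+2)$; these now pass through no 6-valent vertex and are attached only to 4-valent vertices and leaves.

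\textbf{Step 3: disjointness, the main obstacle.} We must check that the remaining strands form a piece $\overline{T}$ disjoint from $\overline{T(\beta)}$. The difficulty is that the closing 4-valent vertices in Fig.~\ref{mapping_to_3link} join the ends $(is),(it),(js),(jt)$ of two maxima. When one maximum is $\{2n+1,2n+2\}$ and the other is $\{2m-1,2m\}$, such a vertex involves only strands of the form $(\cdot,2n+1)$ and $(\cdot,2n+2)$. It therefore never touches strands $(ij)$ with $i,j\le 2n$, which are joined only to each other.

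It remains to compare the closing vertices. The 4-valent vertices coming from pairs of maxima inside $\{1,\dots,2n\}$ are exactly those of $\overline{T(\beta)}$, and the same holds at the minima. Hence the graph splits as $\overline{T(\beta)}\sqcup\overline{T}$. Here $\overline{T}$ consists of the freed strands together with their 4-valent vertices and the leaf of the strand $(2n+1,2n+2)$, so it has only 1- and 4-valent vertices.

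The delicate point is making sure the framing and ordering conventions at the cancelled vertices match the hypotheses of Fig.~\ref{fig:cancel-triples}. In particular, the order of $a_{ij(2n+1)}$ and $a_{ij(2n+2)}$ depends on which side the line sweeps from. If the order comes out reversed, we first rewrite the pair using R2 (which applies, since the two index sets share only two indices) and R1 before applying the cancellation move.
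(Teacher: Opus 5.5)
Your argument is essentially the paper's proof. The word of $\beta'$ is the word of $\beta$ interleaved with consecutive pairs $a_{ij(2n+1)}a_{ij(2n+2)}$; these pairs are deleted by the first move of Fig.~\ref{fig:cancel-triples} because $2n+1,2n+2$ form a maximum; and the freed strands $(\cdot,2n+1)$, $(\cdot,2n+2)$, $(2n+1,2n+2)$ share no 4-valent vertex with $T(\beta)$, which your Step~3 spells out in more detail than the paper's last sentence.

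One correction to your closing remark: R2 requires $|\{i,j,k\}\cap\{s,t,u\}|<2$, so it does \emph{not} allow commuting $a_{ij(2n+1)}$ and $a_{ij(2n+2)}$, which share two indices, and R1 does not reorder anything. Fortunately no reordering is needed, since the cancellation move only asks that $k,l$ form a maximum and is therefore symmetric under $k\leftrightarrow l$.
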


\begin{proof}
    As we mentioned in the proof Lemma~\ref{lem:inv-stabil}, corresponding 3-braid to $\beta \cup (\text{2-parallel strands})$ has the form of $$A_{1}a_{i_{1}j_{1}(2n+1)}a_{i_{1}j_{1}(2n+2)}A_{2}a_{i_{2}j_{2}(2n+1)}a_{i_{2}j_{2}(2n+2)}\dots A_{k}a_{i_{k}j_{k}(2n+1)}a_{i_{k}j_{k}(2n+2)}A_{k+1},$$ where $A_{1}A_{2}\dots A_{k}A_{k+1}$ is the 3-braid corresponding to $\beta$. Then $a_{i_{1}j_{1}(2n+1)}a_{i_{1}j_{1}(2n+2)}$ can be removed by the move in Fig.~\ref{fig:cancel-triples}. Then there is no triple points containing edges with labels $(i,2n+1)$, $(j,2n+2)$ and $(2n+1,2n+2)$. Therefore, $T(\beta')=T(\beta)\sqcup T$, where $T$ consists of straight edges with labels $(i,2n+1)$, $(j,2n+2)$ and $(2n+1,2n+2)$. When we construct $\overline{T(\beta')}$ edges in $T(\beta)$ and $T$ are not connected by 4-valent vertices. It completes the proof.
\end{proof}

\begin{exa}\label{exa:KUO}
Let $K$ be a trefoil and $\beta \in B_{4}$ a braid such that $\overline{\beta} =K$. Then $\beta' \in B_{6}$ such that $\hat{\beta'} = K\sqcup O$ corresponds to a closure $\overline{T(\beta'})$ of a 3-braid and, as described in Fig.~\ref{fig:exa-3diagram-trefoil0}, it is equivalent to $\overline{T(\beta})\sqcup D$ for a graph $D$ without 6-valent vertices. See Fig.~\ref{fig:exa-3diagram-trefoil0}.
    \begin{figure}
\centering\includegraphics[width=10cm]{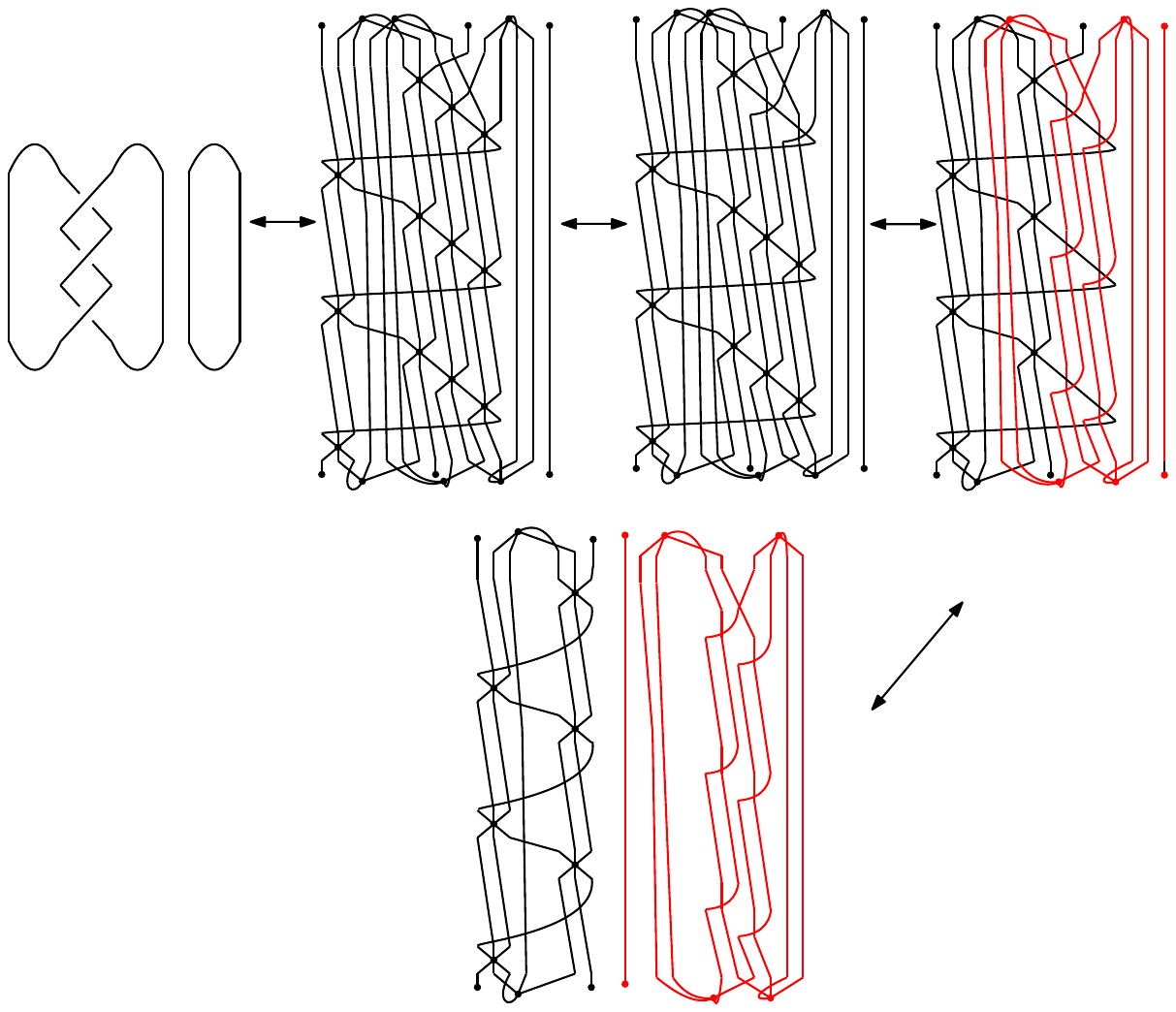}

\caption{For a braid $\beta \in B_{4}$, $\overline{T(\beta')}$ contains $\overline{T(\beta)}$.}
\label{fig:exa-3diagram-trefoil0}
\end{figure}
\end{exa}

\section{Generalization of $T(\beta)$ -- double cover $DT(\beta)$ over $T(\beta)$}

We can generalize the graph $T(\beta)$ with the ordered labels: $(ij) \neq (ji)$. We associate each generator of $\hat{G}_{n}^{3}$ to two 6-valent vertices as described in Fig.~\ref{fig:a_ijk-double-triples}. Then we obtain 3-braids on $n(n-1)$ strands from $w \in \hat{G}_{n}^{3}$.

\begin{figure}
\centering\includegraphics[width=6cm]{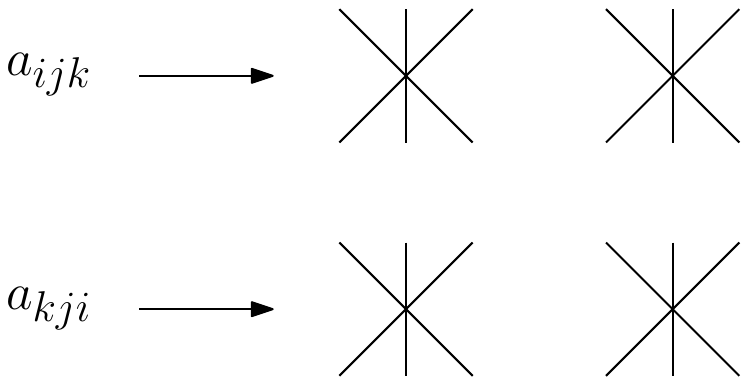}
\put(-100,90){(ij)}
\put(-82,90){(ik)}
\put(-64,90){(jk)}
\put(-40,90){(ji)}
\put(-22,90){(ki)}
\put(-4,90){(kj)}
\put(-100,38){(kj)}
\put(-82,38){(ki)}
\put(-64,38){(ji)}
\put(-40,38){(jk)}
\put(-22,38){(ik)}
\put(-4,38){(ij)}
\caption{Each generator $a_{ijk}$ of $\hat{G}_{n}^{3}$ is associated with a pair of 6-valent vertices.}
\label{fig:a_ijk-double-triples}
\end{figure}

\begin{thm}
    There is a map from $B_{n}$ to 3-braids on $n(n-1)$ strands. We denote it by $DT(\beta)$ for $\beta \in B_{n}$.
\end{thm}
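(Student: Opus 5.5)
The map will be built as a composite, in the same way as $T(\beta)$: first $B_n\to\hat{G}_n^3$, then $\hat{G}_n^3\to$ 3-braids on $n(n-1)$ strands via the doubled vertices of Fig.~\ref{fig:a_ijk-double-triples}. The first map is well-defined by Lemma~\ref{lem:braid-Gn3}. So it suffices to define the second map on words and check that it respects the relations R1, R2, R3. Then $DT(\beta)$ is the image of $\beta$ under the composite.

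\emph{Step 1 (definition on words).} Fix an initial order of the $n(n-1)$ ordered labels $(ij)$, $i\neq j$. This comes from the configuration of the points $P_1,\dots,P_n$ on the upper half circle: the first half of the strands are the labels $(ij)$ with $i<j$, ordered as for $T(\beta)$, and the second half are the reversed labels in mirrored order, as in Fig.~\ref{fig:a_ijk-double-triples}. A generator $a_{ijk}$ is sent to the pair of 6-valent vertices of that figure. One vertex has $(ik)$ in the middle and $(ij)$, $(jk)$ crossing. The other has $(ki)$ in the middle and $(ji)$, $(kj)$ crossing, with the framing given by the picture. A word is sent to the vertical concatenation of these pieces, with identity strands elsewhere. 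One must check that the strands carrying the labels $(ij),(ik),(jk)$ and their reverses are adjacent in the required order at that moment. This holds because the three points pass through collinearity in the $\mathbb{R}^2$ picture.

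\emph{Step 2 (relations).} The strategy is to observe that $DT$ restricted to the labels $(ij)$ and to the labels $(ji)$ separately gives two copies of the construction $T$, the second one mirrored. Under this observation each relation reduces to the corresponding check in Corollary~\ref{lem:Gn3-3braids}, applied to each copy.
\begin{itemize}
\item For R1, the word $a_{ijk}a_{kji}$ produces two consecutive vertices on the same triple of strands in each copy. These cancel by the second-type 3-braid move in each copy.
\item For R2, the index sets share at most one index, so the corresponding label sets $\{(ij),(ik),(jk)\}$ and $\{(st),(su),(tu)\}$ share no unordered pair, and the same holds for the reversed labels. The vertices therefore involve disjoint strands and commute by planar isotopy or the far-commutativity move.
\item For R3, the four generators act on the six labels built from $\{i,j,k,l\}$ in each copy. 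The relation is exactly the tetrahedron (third) 3-braid move, applied once in each half.
\end{itemize}

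\emph{Main obstacle.} The delicate point is the framing and ordering in the second (mirrored) copy. One must verify that a generator $a_{kji}$, read with the reversed orientation convention, produces precisely the mirror vertex. This is needed so that R1 and R3 match the moves of Fig.~\ref{fig:3braidmoves} rather than their reflections. I would first check this on $\hat{G}_3^3$, using the example word $a_{321}a_{312}a_{132}a_{123}a_{213}a_{231}$, and then argue by symmetry of the moves under left--right reflection, since the set of 3-braid moves is closed under mirror.
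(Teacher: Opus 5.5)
Your route is the one the paper intends. The paper asserts this result right after assigning to each $a_{ijk}$ the pair of vertices of Fig.~\ref{fig:a_ijk-double-triples}, i.e.\ via the composite of $B_n\to\hat{G}_n^3$ with the doubled-vertex map, leaving compatibility with R1--R3 implicit. The organising claim of your Step~2, however, is false: $DT$ does not split into a copy of $T$ on the labels $(ij)$ with $i<j$ and a (mirrored) copy on their reverses. The first vertex of $a_{ijk}$ lies on $(ij),(ik),(jk)$, the labels oriented from the outer point $i$ towards the outer point $k$. This triple mixes increasing and decreasing labels unless $i<j<k$ or $i>j>k$. In the paper's own example word, $a_{312}$ produces vertices on $(31),(32),(12)$ and on $(13),(23),(21)$, and already the prefix $a_{321}a_{312}$ gives a connected graph through all six strands. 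So $DT$ is a double cover of $T$ (forget the order in each label), typically connected, not two copies of it. Hence neither ``apply the Corollary to each copy'' nor a global reflection argument is available.

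The checks must be done locally, splitting the labels by the orientation that the indices of each relation determine. For R1, put $F=\{(ij),(ik),(jk)\}$ and $B=\{(ji),(ki),(kj)\}$. The first vertex of $a_{ijk}$ and the \emph{second} vertex of $a_{kji}$ lie on $F$, and the other two lie on $B$. Since $F\cap B=\emptyset$, each pair becomes adjacent and should cancel by the second move. For this, the second vertex of $a_{ijk}$ must be the inverse of the first vertex of $a_{kji}$. Equivalently, it must be the first vertex of $a_{ijk}$ with every label reversed and left/middle/right unchanged. This is what the figure shows: its top row puts $(ji),(ki),(kj)$ in the positions of $(ij),(ik),(jk)$. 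This identity, not closure of the moves under reflection, is what you must verify. A mirrored second vertex would coincide with the first vertex of $a_{kji}$, so $a_{ijk}a_{kji}$ would give the same vertex twice on each of $F$ and $B$ instead of an inverse pair. For R3, the four first vertices lie on the six labels oriented along $i\to j\to k\to l$, and the four second vertices lie on their reverses. This gives a tetrahedron move and its relabelled copy on disjoint strand sets. Your R2 argument is fine. Finally, drop the adjacency check in Step~1. The second map must be defined on arbitrary words of $\hat{G}_n^3$, which have no $\mathbb{R}^2$ picture to appeal to. The check is also unnecessary, since a 3-braid diagram is an abstract framed $*$-graph determined directly by the word.
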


Now we define the closure of $DT(\beta)$ in analogous way to the closure of $T(\beta)$. See Fig.~\ref{fig:mapping_to_double3link}. Let us denote the closure of $DT(\beta)$ by $\overline{DT(\beta)}$.
\begin{figure}
\centering\includegraphics[width=250pt]{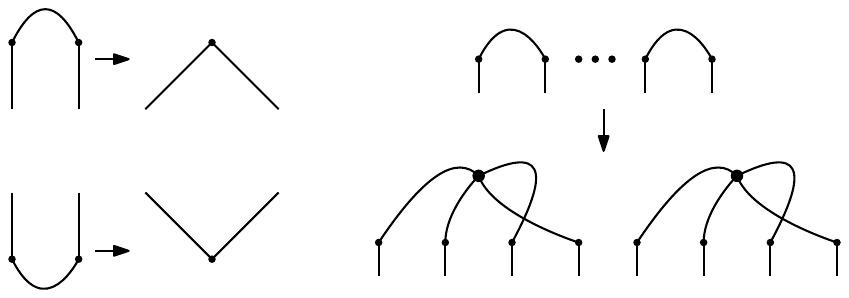}
\put(-250,45){i}
\put(-230,45){j}
\put(-215,45){(ij)}
\put(-175,45){(ji)}
\put(-250,32){i}
\put(-230,32){j}
\put(-215,32){(ij)}
\put(-175,32){(ji)}
\put(-110,50){i}
\put(-90,50){j}
\put(-60,50){k}
\put(-40,50){l}
\put(-145,-5){(ik)}
\put(-125,-5){(il)}
\put(-105,-5){(jk)}
\put(-85,-5){(jl)}
\put(-65,-5){(ki)}
\put(-45,-5){(li)}
\put(-25,-5){(kj)}
\put(-5,-5){(lj)}
\caption{Closing of end points of $DT(\beta)$}
\label{fig:mapping_to_double3link}
\end{figure}
The moves in Fig.~\ref{fig:pass_endpoint} and~\ref{fig:cancel-triples} are associated with moves in Fig.~\ref{fig:cancel-type1-trivial} and~\ref{fig:cancel-double-triples} for $\overline{DT(\beta)}$. Then analogously we can prove the following statements.

\begin{figure}
\centering\includegraphics[width=250pt]{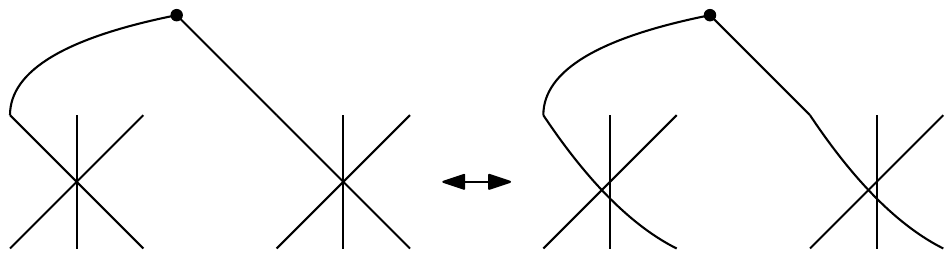}

\caption{Cancellations of two 6-valent vertices I}
\label{fig:cancel-type1-trivial}
\end{figure}

\begin{figure}
\centering\includegraphics[width=250pt]{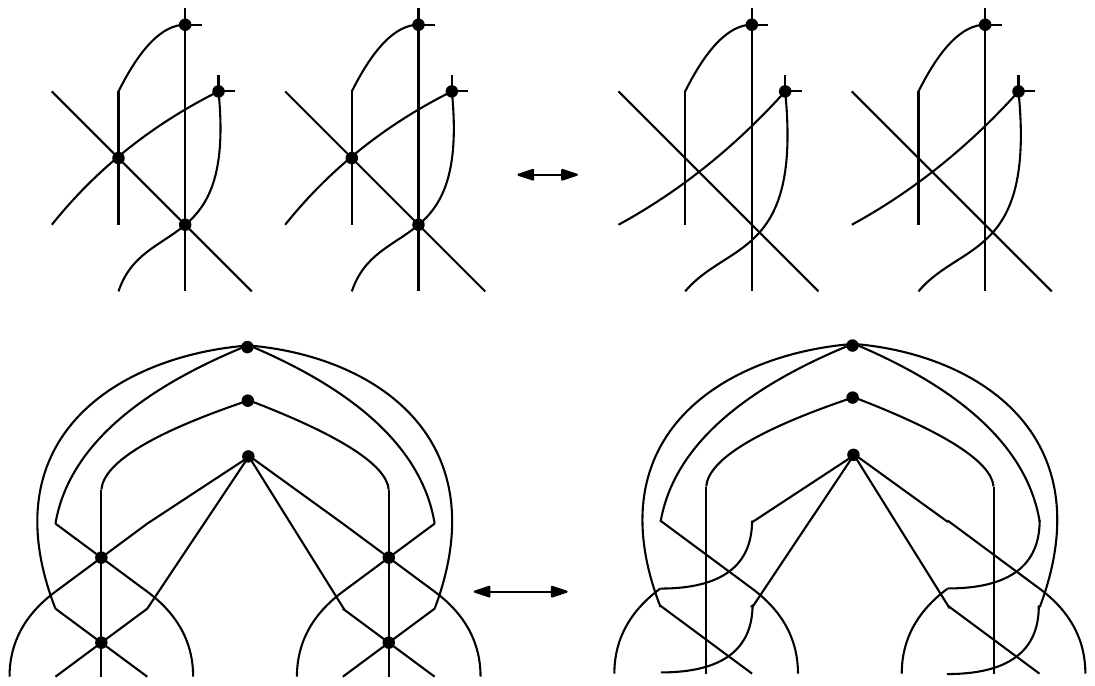}
\caption{Cancellations of two 6-valent vertices II. The first move comes from that two consecutive triple points are $(i,j,k),(i,j,l)$ and $k,l$ make a maximum. The right move comes from that two consecutive triple points are $(i,j,k),(i,l,k)$ and $i,k$ and $j,l$ make maximums.}
\label{fig:cancel-double-triples}
\end{figure}

\begin{thm}
Let $\beta$ and $\beta'$ in $B_{2n}$ be braids such that their plat closures give a same link. Then $\overline{DT(\beta)}$ and $\overline{DT(\beta')}$ are equivalent 3-diagrams with 4-valent vertices up to the move, depicted in Fig.~\ref{fig:cancel-type1-trivial} and~\ref{fig:cancel-double-triples}. Here notice that 1-valent vertex cannot pass through ``the middle'' of triple points.
\end{thm}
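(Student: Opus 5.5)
The plan is to follow the proof of the corresponding theorem for $\overline{T(\beta)}$ almost verbatim, now working in the double cover $DT(\beta)$. By Birman's theorem for plat closures, after stabilizing both braids by $\sigma_{2n_1}\sigma_{2n_1+2}\cdots\sigma_{2n-2}$ we may assume $\beta$ and $\beta'$ differ by a finite sequence of Artin relations and the three moves of Fig.~\ref{move_plat}. The stabilization itself is handled as in Lemma~\ref{lem:inv-stabil}, with the doubled strands in place of the single ones.

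Invariance under Artin relations comes from Lemma~\ref{lem:braid-Gn3}: equal braids give equal elements of $\hat{G}_{n}^{3}$. It then remains to check that the assignment of Fig.~\ref{fig:a_ijk-double-triples} respects R1–R3.
\begin{itemize}
\item R1 ($a_{ijk}a_{kji}=1$): the two pairs of 6-valent vertices cancel by the second 3-braid move, applied on both the $(ij),(ik),(jk)$ sheet and the $(ji),(ki),(kj)$ sheet.
\item R2: labels with fewer than two common indices give vertices sharing no strand, so they commute by planar isotopy.
\item R3: this becomes two copies of the tetrahedron (third) 3-braid move, one on each sheet.
\end{itemize}
Heights of maxima and minima are irrelevant, because the closure in Fig.~\ref{fig:mapping_to_double3link} only depends on which pairs form maxima or minima.

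For the plat moves we reuse the local pictures of Figs.~\ref{proof_plat_move1}–\ref{proof_plat_move2}.
\begin{itemize}
\item For $\beta'=\sigma_1\beta$, each triple point $a_{12m}$ coming from $\sigma_1$ now yields two 6-valent vertices, on strands labelled $(1m),(2m),(12)$ and $(m1),(m2),(21)$. The strands $(12)$ and $(21)$ end at the leaves attached to the maximum, so both vertices are removed by the move of Fig.~\ref{fig:cancel-type1-trivial}. This is the doubled version of Fig.~\ref{fig:pass_endpoint}.
\item For the second and third moves, the consecutive pairs $(i,j,2k+1),(i,j,2k+2)$, with $2k+1,2k+2$ forming a maximum, become quadruples of vertices. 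These split into two pairs whose strands $(i,2k+1),(i,2k+2)$, respectively $(2k+1,i),(2k+2,i)$, meet at a common 4-valent vertex, and the first move of Fig.~\ref{fig:cancel-double-triples} removes them. The vertices among the labels $1,\dots,4$ are removed by the right move of Fig.~\ref{fig:cancel-double-triples}, exactly as in Fig.~\ref{proof_plat_move3}.
\end{itemize}

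The main obstacle is bookkeeping the orientation of labels. I have to check that when a triple $(i,j,k)$ becomes $(k,j,i)$ (generator $a_{kji}$), the two vertices of Fig.~\ref{fig:a_ijk-double-triples} are swapped rather than producing a new configuration. I also have to check that in each cancellation the two vertices to be cancelled really lie on the same sheet and have the "middle" strand in the correct position, since 1-valent vertices may not pass through the middle. I would verify this first on the generator pictures by noting that reversing $(ij)\mapsto(ji)$ for all labels is an involution of $DT(\beta)$ exchanging the two vertices of each pair. This reduces every check to the corresponding check for $T(\beta)$, which was done in the previous theorem.
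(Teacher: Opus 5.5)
The overall route is the paper's own. For this theorem the paper only says the proof goes ``analogously'' to the $\overline{T(\beta)}$ case: lift the local cancellations of Figs.~\ref{proof_plat_move1}--\ref{proof_plat_move2} using Figs.~\ref{fig:cancel-type1-trivial} and~\ref{fig:cancel-double-triples}. Your treatment of the braid relations (lifting R1--R3 to two local copies), of the heights of extrema, and of the three moves of Fig.~\ref{move_plat} follows that route. It is fine apart from the label bookkeeping you already flag.

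The step that fails is ``The stabilization itself is handled as in Lemma~\ref{lem:inv-stabil}.'' That lemma, and its double version Lemma~\ref{lem:inv-stabil-double}, does not show that stabilization leaves the closure unchanged. It shows $\overline{DT(\beta\sigma_{2n})}\sim\overline{DT(\beta)D}$, with an additional braid $D$ of 4-valent vertices. None of the allowed moves creates or removes a 4-valent vertex, while the number of 4-valent closing vertices grows with the number of strands. The remark after Lemma~\ref{lem:inv-stabil} says exactly this, and concludes that the construction ``is not a knot invariant yet''. Hence $\overline{DT(\beta)}$ is never equivalent to the closure of a stabilization of $\beta$, and no rule lets you strip a common $D$ off both sides. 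Your chain $\overline{DT(\beta)}\to\overline{DT(\beta_{\mathrm{stab}})}\sim\overline{DT(\beta'_{\mathrm{stab}})}\to\overline{DT(\beta')}$ therefore breaks at both ends.

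Birman's theorem, as quoted in the paper, only supplies the moves after stabilizing to some $B_{2t}$, even when $\beta,\beta'$ already lie in the same $B_{2n}$. So your argument only proves the statement for $\beta,\beta'$ related inside $B_{2n}$ by braid relations and the moves of Fig.~\ref{move_plat}. The paper's proof has the same scope: it begins ``it suffices to show that if $\bar{\beta'}$ is obtained from $\bar{\beta}$ by one of moves'' and never addresses stabilization. You should state that restriction explicitly rather than claim stabilization is taken care of.
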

\begin{lem}\label{lem:inv-stabil-double}
Let $\beta \in B_{2n}$ and $\beta' \in B_{2n+2}$. Let $G$ and $G'$ be corresponding 3-braids to $\beta$ and $\beta'$ respectively. If $\beta' = \beta \sigma_{2n}$, then $\overline{DT(\beta')}$ is equivalent to $\overline{DT(\beta)D}$ by the moves in Fig.~\ref{fig:cancel-type1-trivial} and~\ref{fig:cancel-double-triples}, where $D$ is a braid only with 4-valent vertices.
\end{lem}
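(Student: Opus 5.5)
The plan is to mirror the proof of Lemma~\ref{lem:inv-stabil} and lift it through the double cover. Each generator $a_{ijk}$ is sent to a pair of 6-valent vertices, one on the strands labelled $(ij),(ik),(jk)$ and one on $(ji),(ki),(kj)$, as in Fig.~\ref{fig:a_ijk-double-triples}. So $DT(\beta)$ is built from the same word in $\hat{G}_{2n}^{3}$ as $T(\beta)$, with every letter doubled. It therefore suffices to check that each move of Fig.~\ref{fig:pass_endpoint} and~\ref{fig:cancel-triples} used in that proof has a counterpart in Fig.~\ref{fig:cancel-type1-trivial} and~\ref{fig:cancel-double-triples}. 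These counterparts must act simultaneously on both sheets, and the closure rule of Fig.~\ref{fig:mapping_to_double3link} must connect the relevant ordered-label ends in the required way.

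First, I write the word in $\hat{G}_{2n+2}^{3}$ coming from $\beta\sigma_{2n}$ in the form
$$A_{1}a_{i_{1}j_{1}(2n+1)}a_{i_{1}j_{1}(2n+2)}A_{2}\cdots A_{k}a_{i_{k}j_{k}(2n+1)}a_{i_{k}j_{k}(2n+2)}A_{k+1}\cdot a_{2n,2n+1,2n+2}a_{2n,2n+1,1}\cdots a_{2n,2n+1,2n-1},$$
exactly as in Lemma~\ref{lem:inv-stabil}, where the $A_{s}$ give $DT(\beta)$. Each consecutive pair $a_{i_sj_s(2n+1)}a_{i_sj_s(2n+2)}$ becomes four 6-valent vertices in $DT$. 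On the first sheet they lie on the labels $(i_s j_s),(i_s,2n+1),(j_s,2n+1)$ and $(i_s j_s),(i_s,2n+2),(j_s,2n+2)$. On the second sheet they lie on the reversed labels. Since $2n+1,2n+2$ form a maximum, the closure of Fig.~\ref{fig:mapping_to_double3link} joins $(i_s,2n+1)$ with $(i_s,2n+2)$, and likewise $(2n+1,i_s)$ with $(2n+2,i_s)$, at 4-valent vertices. Hence the left move of Fig.~\ref{fig:cancel-double-triples} cancels both sheets at once.

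Second, I treat the letters coming from $\sigma_{2n}$. The extreme letters $a_{2n,2n+1,2n+2}$ and $a_{2n,2n+1,2n-1}$ each give two 6-valent vertices adjacent to leaves or end vertices. I remove these with Fig.~\ref{fig:cancel-type1-trivial}, which is the doubled version of the endpoint-passing move. Here I must check, on each sheet, that the end vertex is not passing through the middle strand of the vertex. The middle strand of $a_{ijk}$ is $(ik)$ on one sheet and $(ki)$ on the other. The remaining letters pair up as $(a_{2n,2n+1,2l-1},a_{2n,2n+1,2l})$ for $l=1,\dots,n-1$, and each pair is cancelled on both sheets by the left move of Fig.~\ref{fig:cancel-double-triples}, because $2l-1,2l$ make a minimum.

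Finally, all strands carrying a label that contains $2n+1$ or $2n+2$, in either order, are free of 6-valent vertices and meet only 4-valent vertices. I slide them below the 3-braid to obtain $\overline{DT(\beta)D}$, with $D$ consisting only of 4-valent vertices.

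The main obstacle is the bookkeeping of the second sheet. The vertex for $a_{ijk}$ on the reversed labels has its left and right strands swapped, as Fig.~\ref{fig:a_ijk-double-triples} shows with the order $(ji),(ki),(kj)$ at the top and $(jk),(ik),(ij)$ at the bottom. So I must verify that in each cancelled pair the two vertices on the second sheet are still adjacent, and in the right relative position for Fig.~\ref{fig:cancel-double-triples}. I would check this first on the single pair $a_{ij(2n+1)}a_{ij(2n+2)}$ by direct drawing, and then argue that the remaining cases follow by the same local picture.
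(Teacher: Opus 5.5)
Your proposal is correct and is essentially the paper's own argument. The paper proves this lemma only by saying it goes ``analogously'' to Lemma~\ref{lem:inv-stabil}, and you carry out exactly that lift: you cancel the doubled pairs $a_{i_sj_s(2n+1)}a_{i_sj_s(2n+2)}$ on both sheets, remove the two extreme letters of $\sigma_{2n}$ by the move of Fig.~\ref{fig:cancel-type1-trivial}, pair off the remaining letters by Fig.~\ref{fig:cancel-double-triples}, and slide the freed strands into $D$.

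One small correction to your bookkeeping remark: labels are carried along strands, so in Fig.~\ref{fig:a_ijk-double-triples} the bottom triple $(jk),(ik),(ij)$ is the output of the first-sheet vertex (the two bundles cross). The second-sheet vertex simply has $(ji),(ki),(kj)$ in and $(kj),(ki),(ji)$ out, with middle strand $(ki)$ as you say.
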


\section{Further research: classical crossings of $T(\beta)$}

In the previous sections we constructed 3-braids $T(\beta)$ and $DT(\beta)$, and their closure corresponding to classical links. In particular, each strand corresponds to the straight line connecting two strands of a link (a secant) and each 6-valent vertex corresponds to the singular moment, three points are placed on one straight line, or equivalently, three secants are collinear. In the present construction $T(\beta)$ and $DT(\beta)$ forget classical crossings. We can overcome such a weakness with simple modification by considering classical links with a separately placed vertical line, numbered by $\infty$, see Fig.~\ref{fig:stereographic-proj-trefoil}.

\begin{figure}
\centering\includegraphics[width=5cm]{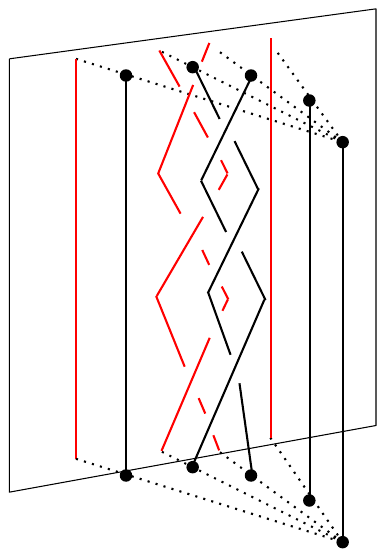}
\put(-12,160){$\infty$}
\caption{A diagram of a knot can be obtained by a projection from $\infty$-vertical line to a plane.}
\label{fig:stereographic-proj-trefoil}
\end{figure}

If we follow the construction of $T(\beta)$, then it contains strands with labels $(i\infty)$. Moreover, an intersection between strands labeled by $(i\infty)$ and $(j\infty)$ corresponds to a classical crossing of $\beta$ between strands $i$ and $j$. Such a deduction follows from the fact that a knot diagram is the shadow of the projection of knots with respect to rays coming from a point, which is placed far from the knot. Simply speaking the model with one vertical link ``contains'' the classical approach with crossings and Reidemeister moves.

The above idea can be applied in the more compatible way to the construction of $\overline{T}(\beta)$ by putting a trivial knot $O$ separately from the given link $L = \overline{\beta}$. Then the secants connecting strands of $L$ and $O$ plays a role of $(i\infty)$. Our expectation can be verified with the example~\ref{exa:KUO}. As we proved in Theorem~\ref{exa:KUO}, $T(K\cup O) = T(K) \sqcup T$, where $K$ is the trefoil knot. In the example~\ref{exa:KUO}, each strands of $T$ in red color has a label $(i5)$ or $(i6)$, see Fig.~\ref{fig:exa-3diagram-trefoil0-trefoil}. We can obtain a diagram of the trefoil knot without over/under information by identifying $(i5)$ and $(i6)$ as described in Fig.~\ref{fig:exa-3diagram-trefoil0-trefoil}. We expect that our graphs $T(\beta)$ and $DT(\beta)$ give different information with knot diagrams for classical crossings.
 \begin{figure}
\centering\includegraphics[width=10cm]{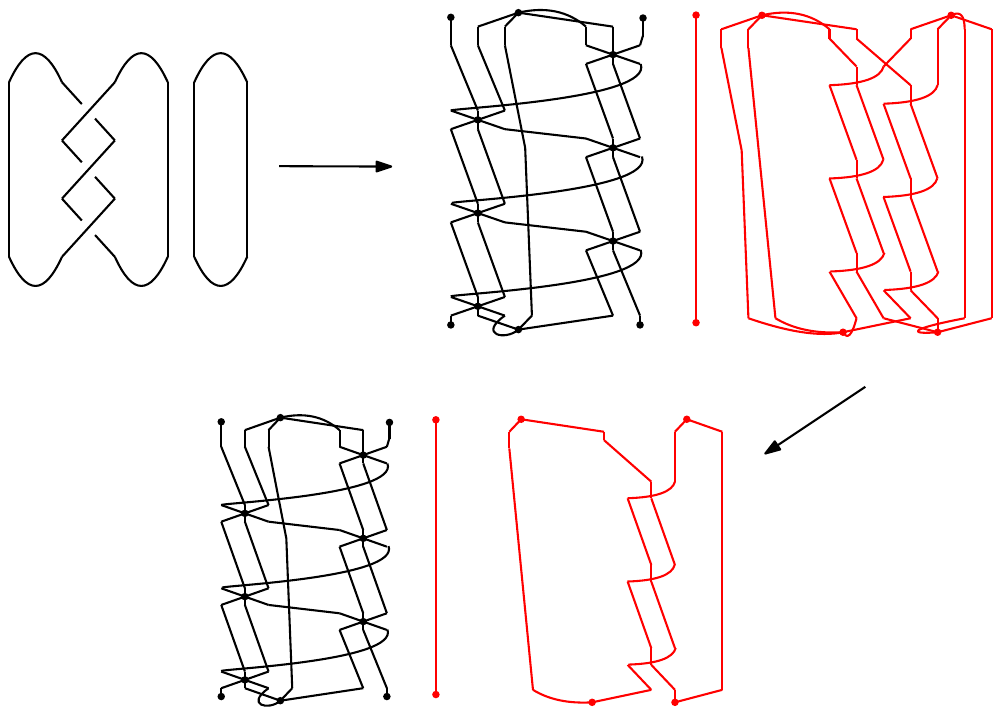}
\put(-90,190){\footnotesize{$(15)$}}
\put(-75,180){\footnotesize{$(16)$}}
\put(-70,205){\footnotesize{$(25)$}}
\put(-50,200){\footnotesize{$(26)$}}
\put(-30,200){\footnotesize{$(35)$}}
\put(-25,190){\footnotesize{$(36)$}}
\put(-15,180){\footnotesize{$(45)$}}
\put(-5,190){\footnotesize{$(46)$}}
\put(-140,95){$T(3_{1})$}
\put(-40,95){$T$}
\caption{$T(3_{1} \sqcup O) = T(3_{1})\sqcup T$. In particular, by identifying strands with $(i5)$ and $(i6)$ in $T$ we obtain a knot diagram of $3_{1}$ without over/under information.}
\label{fig:exa-3diagram-trefoil0-trefoil}
\end{figure}
We will continue to study such graphs containing classical crossings with over/under information.

\end{document}